\newtheorem{thm}{Theorem}[section]
\newtheorem{cor}[thm]{Corollary}
\newtheorem{lem}[thm]{Lemma}
\newtheorem{prop}[thm]{Proposition}
\newtheorem{rem}{Remark}[section]
\numberwithin{equation}{section}
\newcommand\be{\begin{equation}}
\newcommand\ee{\end{equation}}
\newcommand\R{\mathbb R}
\newcommand\rn{\mathbb R^n}
\newcommand\Rnp{\mathbb R^n_+}
\newcommand\rnp{\mathbb R^n_+}
\newcommand{\De}{\Delta}
\def\eps{\varepsilon}
\title[Liouville-type theorems in half-spaces]
{A Liouville-type theorem for the \\ Lane-Emden equation in a half-space}
\author[Dupaigne]{Louis Dupaigne}
\address{Institut Camille Jordan, UMR CNRS 5208, Universit\'e Claude Bernard Lyon 1,
69622 Vil\-leurbanne cedex, France}
\email{dupaigne@math.univ-lyon1.fr}
\author[Sirakov]{Boyan Sirakov}
\address{PUC-Rio, Departamento de Matematica \\
Rua Marqu\^es de S\~ao Vicente 225 \\
G\'avea, Rio de Janeiro - CEP 22451-900, Brazil}
\email{bsirakov@mat.puc-rio.br}
\author[Souplet]{Philippe Souplet}%
\address{Universit\'e Sorbonne Paris Nord,
CNRS UMR 7539, Laboratoire Analyse, G\'{e}om\'{e}trie et Applications,
93430 Villetaneuse, France}
\email{souplet@math.univ-paris13.fr}
\begin{document}

\begin{abstract} We prove that the Dirichlet problem for the Lane-Emden equation in a half-space has no positive solution
which is monotone in the normal direction.
As a consequence, this problem does not admit any positive classical solution which is bounded on finite strips.
This question has a long history and our result solves a long-standing open problem.
Such a nonexistence result was previously available only for bounded solutions,
or under a restriction on the power in the nonlinearity.
The result extends to general convex nonlinearities.

\smallskip
{\bf Keywords.} Lane-Emden equation, semilinear elliptic equation, half-space, Liouville-type theorem,
unbounded solutions.
\end{abstract}
\maketitle

\section{Introduction and main results}

In this paper we prove a Liouville type theorem for positive classical solutions of the Lane-Emden equation in a half-space
with Dirichlet boundary conditions.

Similarly to the original Liouville result on bounded harmonic functions, a Liouville type theorem states that a given PDE has no nontrivial solutions, and in most cases is restricted to signed solutions in the Euclidean space or some unbounded domain in that space. Arguably the most outstanding theorem of this type for semilinear elliptic equations was obtained in \cite{GS}, on the problem
\begin{equation}\label{eq-gs}
-\De u = u^p , \qquad u>0,
\end{equation}
where $p>1$. Gidas and Spruck proved that there do not exist classical
solutions of \eqref{eq-gs} in $\R^n$ provided $1<p<p_c:=(n+2)/(n-2)_+$
(see also \cite{ChL}, \cite{BVV} or \cite{QSbook} for
proofs of this result). If $p\ge p_c$ there are (bounded) positive solutions of \eqref{eq-gs}.

Equation \eqref{eq-gs}, usually referred to as the Lane-Emden equation, is generally viewed as the simplest and most representative semilinear elliptic equation, and as such has been the object of an enormous number of theoretical studies. For an extended and up-to-date list of references we refer to the recent book \cite{QSbook}. The Lane-Emden equation is also the base model for many more general equations, in terms of the elliptic operator in the left-hand side or the nonlinear function in the right-hand side of \eqref{eq-gs}.

A full answer to the existence question for \eqref{eq-gs} is currently not available for proper subdomains of $\rn$. This is so even for the {\it Dirichlet problem in the half-space}
$$\Rnp=\{x\in\R^n;\,x_n>0\},$$
despite its long history and the large number of works on that problem. Studying \eqref{eq-gs} in a half-space is important both because the half-space is the simplest unbounded domain with unbounded boundary and because performing a blow-up close to the boundary for general equations in a smooth domain leads to \eqref{eq-gs} in a half-space.

The latter observation, together with degree theory, was used by Gidas and Spruck in \cite{GS2} to prove existence results for a large class of elliptic equations in a smooth bounded domain. They proved that \eqref{eq-gs} with Dirichlet boundary condition has no solutions in a half-space provided $1<p\le p_c$.
The proof in \cite{GS2} uses a moving planes argument (combined with Kelvin transform), which reduces the problem to the one-dimensional case.

Ten years later Dancer \cite{D} proved by the method of
moving planes that bounded solutions of \eqref{eq-gs} with Dirichlet boundary condition in a half-space are monotone in
the normal direction, and deduced that a nontrivial bounded solution in $\rnp$ gives rise to a solution in $\R^{n-1}$; this implies
that the problem has no {\it bounded} solutions in the range $1<p<p_D:=(n+1)/(n-3)_+$.
A further development was made by Farina \cite{F}, who used variational estimates and stability to show that bounded
solutions (and even solutions that are stable outside a compact set in a slightly smaller range for $p$)
do not exist if $1<p<p_F(n):=(n^2-10n+8\sqrt{n}+13)/((n-3)(n-11)_+)$.
The most general results in these lines of research, as well as an extensive list of references, can be found in the recent work \cite{DF}.

The question of existence of bounded solutions of the Dirichlet problem for \eqref{eq-gs} in a half-space was fully answered a few years ago by Chen, Lin and Zou \cite{CLZ}, who proved that 
there are no such solutions for any $1<p<\infty$. In that paper the authors used a well-chosen auxiliary function involving derivatives of $u$, as well as convexity considerations.

To our knowledge, nonexistence of unbounded solutions of \eqref{eq-gs} in $\rnp$ is completely open
in the supercritical range $p>p_{c}$
 (and for unstable solutions for $p\ge p_F(n+1)$). Here we study this range, and prove that {\it for any} $p>1$
 there are no solutions  which are monotone in the $x_n$ direction. This has been generally expected and conjectured by the experts on Liouville type problems since the above mentioned work by Dancer (1991), which related monotonicity to nonexistence.

 A consequence of our result is the nonexistence of positive classical solutions which are bounded on finite strips.

\begin{thm}
\label{mainthmup}
Let $p>1$. Then the problem
\be\label{pbmDirichletHalfspace1}
\left\{\begin{array}{llll}
\hfill -\Delta u&=&u^p,& x\in\Rnp
\vspace{1mm} \\
\hfill u&=&0,& x\in\partial\Rnp
\end{array}
\right.
\ee
does not admit any positive classical solution which is monotone in the $x_n$ direction.
\end{thm}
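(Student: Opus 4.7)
My plan is to combine monotone passage to the limit with stability analysis of the linearized operator and Farina-type energy estimates. Assume without loss of generality that $w := \partial_{x_n} u \ge 0$. By the strong maximum principle applied to $u$, one has $u>0$ in $\Rnp$. Differentiating the PDE in $x_n$, the function $w$ satisfies the linearized equation $-\De w = p u^{p-1} w$, is nonnegative, and is not identically zero (otherwise $u$ would be independent of $x_n$ and hence vanish, by the Dirichlet condition). The strong maximum principle then gives $w>0$ strictly in $\Rnp$, and the existence of such a positive solution of the linearized equation implies that $u$ is a \emph{stable} solution:
\[
\int_{\Rnp} p u^{p-1} \varphi^2 \, dx \le \int_{\Rnp} |\nabla \varphi|^2 \, dx
\qquad \text{for every } \varphi \in C^\infty_c(\Rnp).
\]
Monotonicity in $x_n$ also yields a pointwise limit $v(x') := \lim_{x_n \to \infty} u(x', x_n) \in (0, \infty]$.

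The main technical obstacle is to show that $v$ is locally finite, i.e., that $u$ is bounded on each strip $\{0<x_n<M\}$. I would attack this by inserting Farina-type test functions $\varphi = u^s \eta$ (with $s>0$ suitably chosen and $\eta$ a cutoff on an arbitrary ball) into the stability inequality; the Dirichlet boundary condition $u=0$ on $\partial \Rnp$ makes such $\varphi$ admissible without requiring $\eta$ to vanish on the boundary. After integration by parts, use of the equation, and an elementary Cauchy--Schwarz, one should obtain integral estimates on powers of $u$ over balls, from which a Moser iteration performed carefully near the boundary yields the desired uniform $L^\infty$ bound on strips.

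Once $v$ is locally finite, standard elliptic regularity lets us pass to the limit in the equation: $u(\cdot, x_n) \to v$ in $C^2_{\mathrm{loc}}(\R^{n-1})$, so $v$ is a positive classical solution of $-\De v = v^p$ in $\R^{n-1}$, which also inherits stability from $u$. For subcritical $p$ in dimension $n-1$ this contradicts Gidas--Spruck, and for intermediate $p$ Farina's theorem for stable solutions rules out $v$ up to the Joseph--Lundgren exponent. To cover the full range $p>1$, additional information inherited from the half-space origin of $v$ must be extracted, presumably via the Hamiltonian-type conservation law
\[
\frac{d}{dx_n} \int_{\R^{n-1}} \Bigl( w^2 - |\nabla_{x'} u|^2 + \tfrac{2}{p+1} u^{p+1}\Bigr) \, dx' = 0
\]
combined with the convexity of $s \mapsto s^p$; the reliance on convexity would also explain the announced extension to general convex nonlinearities.

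The two genuinely hard steps are expected to be the local boundedness on strips, where monotonicity must be exploited in full (nothing a priori prevents $u$ from blowing up pointwise as $x_n \to \infty$), and the final contradiction in the supercritical range $p \ge p_c(n-1)$, where neither Gidas--Spruck nor Farina alone suffices and one must use the specific half-space geometry that produced $v$.
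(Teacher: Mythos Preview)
Your outline is essentially the classical Dancer--Farina strategy: show stability, extract a (finite) limit profile $v$ in $\R^{n-1}$, and invoke a Liouville theorem there. This route is well understood and, as the introduction recounts, it reaches the conclusion only for $p$ below the exponents $p_D$ or $p_F$. Both of the ``genuinely hard steps'' you flag are exactly where this approach is known to break down, and your sketch does not supply a way around them.

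First, the $L^\infty$ bound on strips via Moser iteration is not available for supercritical $p$: stability together with Farina-type test functions yields $H^1$ or $L^{p+1}$ control on balls, but bootstrapping to $L^\infty$ requires subcritical growth. In high dimension and for large $p$, stable solutions are not known to be locally bounded, so this step is itself open. (Note also that ``$u$ bounded on each strip'' is strictly weaker than ``$v$ locally finite''; monotonicity in $x_n$ does not prevent $u\to\infty$.) Second, and more fundamentally, even granting a finite, stable, positive limit $v$ solving $-\Delta v=v^p$ in $\R^{n-1}$, such $v$ \emph{do exist} once $p$ is supercritical in dimension $n-1$; neither Gidas--Spruck nor Farina excludes them. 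Your Hamiltonian identity is only formal (nothing guarantees the integrand lies in $L^1(\R^{n-1})$) and no contradiction has ever been extracted from it in this generality.

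The paper's argument is entirely different and avoids any reduction to a lower-dimensional problem. One proves directly that $u$ is \emph{convex} in $x_n$. Setting $z=(1+x_n)u_{x_n}$ and
\[
\xi=\frac{u_{x_nx_n}}{(1+x_n)\,u_{x_n}},\qquad \mathcal L=z^{-2}\nabla\cdot(z^2\nabla),
\]
a direct computation gives $-\mathcal L\xi\ge 2\xi^2$ in $\Rnp$, with $\xi=0$ on $\partial\Rnp$. A quantified Moser-type estimate on this \emph{nonlinear} inequality (testing with $(\xi_-)^{2\alpha-1}\psi^{2m}$ and optimizing $\alpha$ against the cut-off scale $R$) shows that $\|\xi_-\|_{L^m(B_R^+,\,z^2dx)}\le C m R^{-2}\mu(B_{2R}^+)^{1/m}$. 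The stability of $u$ is used only to secure the $H^1$ bound on unit balls, which gives polynomial growth of $\int_{B_R^+}z^2$; sending $m=R\to\infty$ then forces $\xi_-\equiv 0$, i.e.\ $u_{x_nx_n}\ge 0$. Finally, a positive supersolution of $-\Delta u\ge c_0u-c_1$ has uniformly bounded local $L^1$ averages, which is incompatible with $u_{x_n}>0$ and $u_{x_nx_n}\ge 0$. Convexity of $f$ is used twice: to get $-\mathcal L\xi\ge 2\xi^2$ (via $f''\ge 0$) and to ensure the linear lower bound $f(u)\ge c_0u-c_1$.
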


 \begin{cor}\label{coro1}
For any $p>1$ problem \eqref{pbmDirichletHalfspace1} does not admit any 
positive classical solution
 which is bounded on finite strips, i.e.
$$\sup_{ \Sigma_R} u<\infty\ \hbox{ for each $R>0$, where }\ \Sigma_R=\{x\in \Rnp;\ x_n<R\}.$$
\end{cor}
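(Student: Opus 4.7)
The plan is to reduce Corollary \ref{coro1} to Theorem \ref{mainthmup} by proving that every positive classical solution $u$ of \eqref{pbmDirichletHalfspace1} which is bounded on each finite strip must be monotone non-decreasing in $x_n$. The monotonicity will be obtained by the moving plane method in the direction $e_n$, in the spirit of Dancer \cite{D}, adapted to the present situation where $u$ is only locally bounded in the normal variable.

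Concretely, for $\lambda>0$ and $x\in\Sigma_\lambda$, I introduce the reflection $u^\lambda(x',x_n)=u(x',2\lambda-x_n)$ and the defect $w^\lambda=u^\lambda-u$. These satisfy
$$
-\Delta w^\lambda= c^\lambda(x)\,w^\lambda\quad\text{in }\Sigma_\lambda,\qquad c^\lambda=\frac{(u^\lambda)^p-u^p}{u^\lambda-u},
$$
with $w^\lambda(x',0)=u(x',2\lambda)\ge 0$ and $w^\lambda(x',\lambda)=0$. The crucial point is that the strip-boundedness hypothesis $\sup_{\Sigma_{2\lambda}}u<\infty$ forces $w^\lambda,c^\lambda\in L^\infty(\Sigma_\lambda)$, which makes maximum principle arguments available in spite of the unbounded tangential directions. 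The goal is then to establish $w^\lambda\ge 0$ on $\Sigma_\lambda$ for every $\lambda>0$: this yields $u(x',s)\le u(x',t)$ for all $0<s<t$ (by choosing $\lambda=(s+t)/2$) and hence the desired monotonicity, after which Theorem \ref{mainthmup} delivers the contradiction.

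The argument is the usual two-step sliding. For $\lambda>0$ sufficiently small one gets $w^\lambda\ge 0$ from a narrow-domain maximum principle in unbounded slabs, valid because $c^\lambda$ and $w^\lambda$ are bounded on $\Sigma_\lambda$. One then defines $\lambda_0$ as the supremum of those $\lambda>0$ for which $w^\mu\ge 0$ on $\Sigma_\mu$ for all $\mu<\lambda$, the aim being $\lambda_0=+\infty$. Closedness follows by continuity of $w^\lambda$ in $\lambda$; the openness step combines the strong maximum principle on $\Sigma_{\lambda_0}$ (with $w^{\lambda_0}\not\equiv 0$, since $u(x',2\lambda_0)>0$) with the narrow-strip principle applied to a thin slab $\{\lambda_0-\eta<x_n<\lambda_0+\delta\}$ to control the new sliver, while continuity handles the bulk $\Sigma_{\lambda_0-\eta}$.

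The principal obstacle is running this last step in a domain that is unbounded in the tangential variables: the compactness arguments available in the bounded-solution case of \cite{D} are no longer at hand, so the verification that $w^\lambda\ge 0$ persists past $\lambda_0$ must rely throughout on the uniform $L^\infty$ bounds of $c^\lambda$ and $w^\lambda$ on each $\Sigma_\lambda$ furnished by the strip-boundedness hypothesis, together with a Berestycki--Caffarelli--Nirenberg type narrow-slab maximum principle valid in tangentially unbounded domains. Granted this, the monotonicity of $u$ in $x_n$ follows, and Theorem \ref{mainthmup} yields the nonexistence claim.
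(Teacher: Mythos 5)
Your argument is the same as the paper's: the paper deduces Corollary~\ref{coro1} from Theorem~\ref{mainthmup} by invoking the known fact (moving planes with the Berestycki--Caffarelli--Nirenberg narrow-slab maximum principle, as in \cite{BCNAnn}, \cite{F2}, \cite{QScpde}) that any positive solution bounded on finite strips is strictly monotone in~$x_n$, and that is exactly the reduction you carry out. You merely sketch the moving-plane argument instead of citing references, while acknowledging that the openness step at $\lambda_0$ rests on the same BCN-type maximum principle, so there is no substantive difference.
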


Our method is not limited to the Lane-Emden equation. We actually show nonexistence for any convex nonlinearity $f$
with $f(0)=0$.
Theorem \ref{mainthmup} and Corollary \ref{coro1} are a special case
of the following nonexistence result for more general nonlinearities.

\begin{thm}
\label{mainthm7}
Let $f\in C^1([0,\infty))\cap C^2(0,\infty)$ be
 nonnegative and convex,  with $f(0)=0$ and $f\not\equiv0$.
 Let $u\ge0$ be a classical solution of 
\be\label{pbmDirichletHalfspace2}
\left\{\begin{array}{llll}
\hfill -\Delta u&=&f(u),& x\in\Rnp
\vspace{1mm} \\
\hfill u&=&0,& x\in\partial\Rnp
\end{array}
\right.
\ee
 which is monotone in the $x_n$ direction. Then $u\equiv0$. 

In particular  \eqref{pbmDirichletHalfspace2} does not admit any nontrivial nonnegative classical solution which is bounded on finite strips.
\end{thm}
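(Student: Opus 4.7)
I would argue by contradiction, assuming $u\not\equiv 0$. Since $u\ge 0$, $u|_{\partial\rnp}=0$, and $u$ is monotone in $x_n$, the monotonicity must be non-decreasing, so $w:=\partial_n u\ge 0$. The strong maximum principle applied to $-\Delta u=f(u)\ge 0$ gives $u>0$ in $\rnp$, and differentiating the PDE yields $-\Delta w=f'(u)w$. A second application of the strong maximum principle rules out $w\equiv 0$ (which would force $u$ to be $x_n$-independent and hence identically zero), so $w>0$ in $\rnp$; Hopf's lemma gives $w>0$ on $\partial\rnp$ as well. The existence of this positive solution of the linearized equation $L\phi:=-\Delta\phi - f'(u)\phi=0$ says that $u$ is a stable solution on every bounded subdomain; and the convexity of $f$ with $f(0)=0$ supplies the key pointwise inequality $uf'(u)\ge f(u)$, equivalently $Lu\le 0$.

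Next I would pass to the limit $v(x'):=\lim_{x_n\to\infty}u(x',x_n)\in(0,\infty]$, which exists by monotonicity. If $v$ is everywhere finite, standard compactness applied to the vertical translates $u_k(x',x_n):=u(x',x_n+k)$ upgrades pointwise to $C^2_{\rm loc}$ convergence and produces a classical solution $v$ of $-\Delta_{x'}v=f(v)$ on $\R^{n-1}$, with $u(x',x_n)\le v(x')$ pointwise. If, in addition, $v$ is bounded, then $u$ is bounded on $\rnp$, and the Chen--Lin--Zou nonexistence theorem for bounded solutions (extended from pure powers to convex $f$) forces $u\equiv 0$, a contradiction.

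The remaining cases---$v\equiv+\infty$ on a nonempty set, or $v$ finite but unbounded on $\R^{n-1}$---form the novel heart of the theorem: Dancer's classical reduction to a Liouville statement in one dimension less is unavailable for general $p>1$, and Farina-type stability arguments only cover $p<p_F(n)$. My plan is to exploit the comparison $Lu\le 0=Lw$ directly in $\rnp$ through an integral identity obtained by testing the equation for $w$ against $\zeta^2 u/w$ (or a similar ratio combining $u$, $w$ and a cutoff $\zeta$ in $x'$), integrating over horizontal slabs $\{0<x_n<R\}$, and passing to the limit $R\to\infty$. The positivity and monotonicity of $w$ should control the slab boundary terms, and the convex-nonlinearity inequality $f'(u)u\ge f(u)$ should force the resulting nonnegative energy to vanish, yielding either $u\equiv 0$ or $w\equiv 0$, both of which are contradictions.

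\textbf{Main obstacle.} The hardest step is closing this integral estimate in the case where $u$ is truly unbounded, since the usual toolkit (blow-up plus Liouville in $\R^{n-1}$, Pohozaev identities, or stability-based energy estimates) either requires boundedness or a restricted range of $p$. The new contribution should be a subtle use of monotonicity in $x_n$ combined with convexity---probably via an auxiliary function built from $u$ and $\partial_n u$ in the spirit of Chen--Lin--Zou, but now engineered to handle possibly unbounded monotone solutions uniformly in $p$.
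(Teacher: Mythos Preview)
Your setup is fine through the point where you establish $w:=u_{x_n}>0$ and stability. After that, the plan diverges from the paper and, more importantly, leaves the essential difficulty unresolved.

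The case-splitting via $v(x')=\lim_{x_n\to\infty}u(x',x_n)$ is not used in the paper and does not buy much: even in the ``good'' case where $v$ is finite and bounded you would be invoking a Chen--Lin--Zou theorem for general convex $f$, which is essentially what you are trying to prove; and in the ``bad'' unbounded case your proposed test function $\zeta^2 u/w$ together with the inequality $Lu\le 0=Lw$ leads at best to Farina-type stability estimates, which you yourself note do not cover the full range. There is no mechanism in your outline that closes this estimate uniformly.

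The paper does not split cases at all. The target is to show directly that $u_{x_nx_n}\ge 0$ in $\rnp$; once this is known, the combination $u_{x_n}>0$, $u_{x_nx_n}\ge 0$ is incompatible with the elementary $L^1$ bound $\int_{B_{R_0}(x_0)}u\le C$ coming from $f(s)\ge c_0 s-c_1$ (Lemma~\ref{mainthm3a}). The correct Chen--Lin--Zou auxiliary quantity is not built from $u$ and $u_{x_n}$ but from the \emph{second} derivative: with $z=(1+x_n)u_{x_n}$ one sets
\[
\xi=\frac{u_{x_nx_n}}{(1+x_n)u_{x_n}},
\]
and a direct computation using the convexity of $f$ gives the \emph{nonlinear} inequality
\[
-\nabla\cdot(z^2\nabla\xi)\ \ge\ 2\,z^2\,\xi^2\quad\text{in }\rnp,\qquad \xi=0\ \text{on }\partial\rnp.
\]
The new idea, absent from your proposal, is to exploit the full strength of the right-hand side $2\xi^2$ (CLZ and earlier treatments used only its sign, which is why they needed boundedness). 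One tests with $(\xi_-)^{2\alpha-1}\psi^{2m}$ and, after H\"older, obtains
\[
\Bigl(\int_{B_R^+} z^2\,(\xi_-)^m\Bigr)^{1/m}\ \le\ \frac{C\,m}{R^2}\,\Bigl(\int_{B_{2R}^+} z^2\Bigr)^{1/m}
\]
for all $m\ge 3$ and $R>1$ (Lemma~\ref{lemdiv}). The second ingredient is that $\int_{B_{2R}^+} z^2$ grows only polynomially in $R$: this follows from the $H^1$ estimate for stable solutions of Cabr\'e--Figalli--Ros-Oton--Serra (Proposition~\ref{lemstabilCFRS}), or from the simpler Proposition~\ref{lemstabilOUR} when $f(u)=u^p$, combined with the $L^1$ bound of Lemma~\ref{mainthm3a}. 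Choosing $R=m\to\infty$ then forces $\xi_-\equiv 0$, i.e.\ $u_{x_nx_n}\ge 0$, and one concludes.

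In short, the two pieces you are missing are: (i) the precise auxiliary function $\xi$ and its nonlinear inequality, and (ii) the quantitative Moser-type iteration that trades the exponent $m$ against the scale $R$, fed by an a priori $H^1$ bound for stable solutions. Your ratio $u/w$ and the linear comparison $Lu\le 0$ do not produce an inequality of this strength.
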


\begin{rem}\label{rempos} That positive solutions of \eqref{pbmDirichletHalfspace2} which are bounded on finite strips are (strictly) monotone in the $x_n$-direction is a general fact, valid for every locally Lipschitz $f$ on $[0,\infty)$
such that $f(0)\ge0$. This follows from the Hopf lemma and the method of moving planes, in the form used in \cite{BCNAnn}. A full argument can be found in \cite[Theorem 3.1]{F2} or in the proof of
\cite[Theorem 3.1]{QScpde}. {Thus Corollary \ref{coro1} is an immediate consequence of Theorem \ref{mainthmup}.}

When the space dimension $n=2$, monotonicity holds even without the assumption of boundedness on strips, see \cite{DS}, \cite{FS}. Therefore, under the assumption of Theorem~\ref{mainthm7} with $n=2$, problem \eqref{pbmDirichletHalfspace2}
does not admit any positive classical solution at all, which gives an alternative proof of the result 
in \cite{GS2} for $n=2$ (other proofs can be found in \cite{FS}).
\end{rem}

\begin{rem}\label{remf0}
(i) If $f(0)>0$ and $f$ is convex, then the existence or nonexistence of nonnegative solutions
for problem \eqref{pbmDirichletHalfspace2} can be completely classified.
Since this follows from more or less known results and from rather simple arguments,
we treat this case in the appendix.

\smallskip

(ii) If $f(0)<0$, then the problem becomes very different, 
we refer to \cite{BCNAnn, FS} and the references therein for results on this case.
\end{rem}

The main point in the proof of Theorem~\ref{mainthm7} is to show that $u$ is convex in the $x_n$-direction, as was already observed in \cite{CLZ} for bounded solutions (see also \cite{QSbook} for a simplified proof). Indeed, our assumptions imply that $f$ grows linearly, that is, $f(u)\ge c_0u-c_1$ for some constants $c_0,c_1>0$. Since $u>0$, this implies in turn that the average $\overline u_{x,r}$ of $u$ over any ball $B(x,r)$ of fixed large radius is uniformly bounded above, see Lemma \ref{mainthm3a} below. Now, if $u_{x_n}>0$ and $u_{x_nx_n}\ge 0$, the average $\overline u_{x,r}$ cannot remain bounded as $x_n\to+\infty$.

To prove that $u_{x_nx_n}\ge0$, differentiate once the equation with respect to $x_n$. 
Then $v=u_{x_n}$ is a positive solution of the linearized equation
$$
-\Delta v = f'(u)v\qquad\text{in $\R^n_+$},
$$
and so $u$ is stable (cf.~Section~2). In other words, the maximum principle holds for the linearized operator $-\Delta -f'(u)$ on any bounded domain of $\R^n_+$.
Differentiate once more: since $f$ is convex, $w=u_{x_nx_n}$ solves
$$
-\Delta w -f'(u) w \ge 0 \qquad\text{in $\R^n_+$,}
$$
and moreover $w=0$ on $\partial\R^n_+$. If one could apply the maximum principle to $w$ in the whole of $\R^n_+$, we would be done. 
But of course such a statement cannot be true in general (as can be seen by simple examples such as the harmonic function $w=-x_n$; note that $f'\ge 0$).
However, we can follow a clever trick of \cite{CLZ} and observe that the function $z=(1+x_n)u_{x_n}$ is a strict subsolution of the linearized equation. More precisely,
$$
-\Delta z -f'(u)z = -2w\qquad\text{in $\R^n_+$,}
$$
so that the quotient
$$\xi=\frac wz=\frac{u_{x_nx_n}}{(1+x_n)u_{x_n}}$$
and the elliptic operator $\mathcal{L}={z^{-2}}\nabla\cdot(z^2\nabla)$ solve
$$
-\mathcal{L} \xi\ge  2\xi^2 \quad \mbox{ in }\;\Rnp
$$
(see Step 1 of the proof of Theorem~\ref{mainthm7} for details). 
In this reformulation of the problem, it remains to prove that $\xi\ge0$. The way we do that is completely different from the previous works and makes use of a new idea, based on some delicate estimates which we describe next.
The point is to use the full strength of the nonlinear right-hand side (as opposed to just its sign in \cite{CLZ}, \cite{QSbook}).
More precisely, we estimate the $L^\infty$ norm of $\xi_-$ 
by a kind of quantified Moser-type iteration,
rather than using a pointwise form of the maximum principle, which required 
the boundedness of $u$ in the previous works. Precisely, we use
test-functions given by powers of $\xi_-$ multiplied by powers of a standard cut-off and optimize the resulting inequality by relating the power of $\xi_-$ and the scaling parameter. The desired $L^\infty$ bound follows by sending
simultaneously the power and the scaling parameter to infinity.

For general convex nonlinearities this last step makes crucial use of a recent and difficult universal
$H^1$ estimate from \cite{CFRS}, valid for stable solutions of semilinear
elliptic equations on unit balls and half-balls. 
However, for a large class of nonlinearities, which includes in particular the Lane-Emden case $f(u)=u^p$ with $p>1$,
the $H^1$ bound can also be proved by simpler and more classical arguments (cf.~Lemma~\ref{lemstabilOUR}),
thus making the proof of Theorem~\ref{mainthmup} self-contained.

\medskip

The outline of the rest of the article is as follows.
Section~2 is concerned with stable solutions and gathers basic concepts and useful estimates.
In Section~3, we give two auxiliary lemmas that play a pivotal role in the proof of Theorem~\ref{mainthm7}.
The latter is proved in Section~4. Finally, the case $f(0)>0$ is treated in the appendix.

\begin{rem}\label{rempreprint} The present paper is an update and improvement of
part of an unpublished work by the last two authors (ArXiv preprint 2002.07247).
The Liouville type results therein were 
weaker, as they required growth restrictions on the solution as $x_n\to\infty$,
which are completely removed here.
\end{rem}

\section{ Estimates for stable solutions}

We start by recalling that, for any domain $\Omega\subset\R^n$ and $f\in C^1([0,\infty))$, a
nonnegative solution $u\in C^2(\Omega)$ of $-\Delta u=f(u)$ in $\Omega$ is said to be stable if
\be\label{defstabil}
\int_\Omega f'(u)\varphi^2\,dx\le \int_\Omega |\nabla\varphi|^2\,dx,
\quad\hbox{ for all $\varphi\in C^\infty_0(\Omega)$.}
\ee
In particular, monotone solutions are stable, namely:

\begin{prop}
\label{propstable}
Let $\Omega$ be any domain of $\R^n$, $f\in C^1([0,\infty))$ and let $u\in C^2(\Omega)$
be a nonnegative solution of $-\Delta u=f(u)$ in $\Omega$.
If $u_{x_n}>0$ in $\Omega$, then $u$ is stable.
\end{prop}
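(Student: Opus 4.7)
The plan is to establish stability by exhibiting a positive solution of the linearized equation, namely $v=u_{x_n}$ itself, and then use the classical Fischer-Courant type trick (testing the linearized equation against $\varphi^2/v$) to obtain the stability inequality.

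First, I would upgrade the regularity of $u$. Since $u\in C^2(\Omega)$ and $f\in C^1([0,\infty))$, the right-hand side $f(u)$ lies in $C^1(\Omega)$, so by standard interior elliptic (Schauder) regularity applied locally, $u\in C^{3,\alpha}_{\mathrm{loc}}(\Omega)$. Differentiating $-\Delta u=f(u)$ once in the $x_n$ direction, the function $v:=u_{x_n}\in C^2(\Omega)$ is a classical solution of the linearized equation
\[
-\Delta v = f'(u)\, v \qquad \text{in }\Omega,
\]
and by hypothesis $v>0$ throughout $\Omega$.

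Next, fix $\varphi\in C_0^\infty(\Omega)$. Since $v$ is continuous and strictly positive on the compact set $\mathrm{supp}(\varphi)$, the function $\varphi^2/v$ is a legitimate test function in $C_0^{0,1}(\Omega)$. Multiplying the linearized equation by $\varphi^2/v$ and integrating by parts gives
\[
\int_\Omega f'(u)\,\varphi^2\,dx=\int_\Omega \nabla v\cdot\nabla\!\left(\frac{\varphi^2}{v}\right)dx=\int_\Omega\left(\frac{2\varphi}{v}\,\nabla v\cdot\nabla\varphi-\frac{\varphi^2}{v^2}|\nabla v|^2\right)dx.
\]
Applying the elementary inequality $2ab\le a^2+b^2$ with $a=\dfrac{\varphi}{v}\nabla v$ and $b=\nabla\varphi$ pointwise bounds the first integrand by $\dfrac{\varphi^2}{v^2}|\nabla v|^2+|\nabla\varphi|^2$. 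The $|\nabla v|^2/v^2$ terms cancel, leaving exactly the stability inequality \eqref{defstabil}.

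The only delicate point is the integration by parts, but this is routine here: because $\varphi$ has compact support in $\Omega$ and $v$ is uniformly bounded away from zero on $\mathrm{supp}(\varphi)$, $\varphi^2/v$ is a valid Lipschitz test function with compact support, so no boundary terms appear and all integrals converge absolutely. I do not foresee a substantive obstacle; the main subtlety to flag in the write-up is merely justifying the regularity of $v$ needed to differentiate the equation pointwise.
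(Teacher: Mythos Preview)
Your proof is correct and follows the classical direct route: you exhibit $v=u_{x_n}>0$ as a positive solution of the linearized equation and then test against $\varphi^2/v$ (a Picone/Barta--type identity) to recover the stability inequality. The paper instead invokes the abstract fact (citing \cite{BNV}) that a positive supersolution of a linear operator forces the principal Dirichlet eigenvalue to be positive on every $\omega\subset\subset\Omega$, and then reads off \eqref{defstabil} from the variational characterization of that eigenvalue. Both arguments rest on the same observation that $v>0$ solves $-\Delta v=f'(u)v$; the paper's version is more conceptual and transportable, while yours is fully self-contained and avoids the external reference.

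One small correction on regularity: from $f\in C^1$ and $u\in C^2$ you only get $f(u)\in C^1$, not $C^{1,\alpha}$ in general (since $f'$ need not be H\"older), so Schauder does not quite yield $u\in C^{3,\alpha}_{\rm loc}$. What you do obtain, via $L^p$ elliptic regularity, is $u\in W^{3,p}_{\rm loc}$ for every $p<\infty$, hence $v\in W^{2,p}_{\rm loc}\cap C^{1,\alpha}_{\rm loc}$ satisfies the linearized equation in the strong sense. This is more than enough for your integration by parts, so the argument goes through unchanged after this cosmetic adjustment.
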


This fact is well known  (see e.g. \cite{Dup}) but we give a short proof for convenience.

\begin{proof} It is well 
known that if a linear elliptic operator admits a positive supersolution in the closure of a bounded domain,
then its first eigenvalue in this domain is positive (see for instance \cite{BNV}).

Since $v:=u_{x_n}>0$ satisfies $-\Delta v=f'(u)v$,
 the first Dirichlet eigenvalue $\lambda_\omega$ of the linearized operator is positive in any compactly included subdomain $\omega\subset\subset\Omega$. But by the variational characterization of the eigenvalue $\lambda_\omega$,
 $$\lambda_\omega=\inf_{0\not\equiv\varphi\in C^\infty_0(\omega)}
 \frac{\int_\omega (|\nabla\varphi|^2-f^\prime(u)\varphi^2)\,dx}{\int_\omega \varphi^2\,dx}>0$$
implies that $u$ is stable.
\end{proof}

Throughout the rest of the article, we shall use the following notation:
$$B_R=\{x\in \R^n;\ |x|<R\},\qquad B_R^+=B_R\cap \Rnp,\qquad \Sigma_R=\{x\in \Rnp;\ x_n<R\}$$
and $s_-=\max(-s,0)$ for all $s\in\R$.

The next two propositions give $H^1$ bounds for nonnegative stable solutions of
 \be\label{hyplemstab3}
 -\Delta u=f(u)\quad\hbox{ in $\Omega$.}
 \ee
We begin with a statement that holds in greater generality.
\begin{prop}
\label{lemstabilCFRS}
Let $f\in C^1([0,\infty)$ be nonnegative.
\smallskip

(i) Let $\Omega=B_{1/2}$, $\omega=B_{1/4}$ and let $u\in C^2(\Omega)$
 be a nonnegative stable solution of \eqref{hyplemstab3}.
 Then, we have
 \be\label{estimlemstab2}
\int_\omega |\nabla u|^2 \le C \Bigl(\int_\Omega u\Bigr)^2.
\ee
for some constant $C$ depending on $n$ only.

(ii) Let $\Omega=B_2^+$, $\omega=B_1^+$ and $\Gamma=B_2\cap\{x_n=0\}$
 and assume that $f$ is nondecreasing.
Let $u\in C^2(\Omega)$ be a nonnegative stable solution of \eqref{hyplemstab3} satisfying the boundary condition
\be\label{hyplemstab4}
 u\in C^1(\Omega\cup\Gamma),\qquad  u_{|\Gamma}=0.
 \ee
Then, estimate \eqref{estimlemstab2} holds true.
\end{prop}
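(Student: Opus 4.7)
The plan for part (i) is to combine the stability inequality \eqref{defstabil} with the equation through Caccioppoli-type manipulations. I would first test $-\Delta u = f(u)$ against $u\eta^2$, where $\eta\in C^\infty_0(B_{1/2})$ satisfies $\eta\equiv 1$ on $B_{1/4}$ and $|\nabla\eta|\le C$; integration by parts and Young's inequality then yield
\[
\int_{B_{1/2}} |\nabla u|^2 \eta^2\,dx \le 2\int_{B_{1/2}} f(u)\, u\, \eta^2\,dx + C\int_{B_{1/2}} u^2 |\nabla\eta|^2\,dx.
\]
To control the term $\int f(u)u\eta^2$, I would apply \eqref{defstabil} with a suitably chosen test function built from $u$ and a cutoff (typically $\varphi = u^\alpha\eta$ for a well-tuned $\alpha$), and combine the resulting bound with the Caccioppoli estimate. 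Routine manipulations then lead to the intermediate bound $\int_{B_{1/4}} |\nabla u|^2 \le C\|u\|_{L^2(B_{1/2})}^2$. The final step is to replace the $L^2$ norm of $u$ on the right-hand side by the $L^1$ norm; this step exploits the superharmonicity of $u$ (which follows from $f\ge 0$) and is the substantial contribution of \cite{CFRS}, typically implemented by a delicate iteration/Moser-type argument.

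For part (ii), I would adapt the same strategy to the half-ball $B_2^+$, using cutoffs $\eta$ supported in $B_2$ that are not required to vanish on $\Gamma$. The Dirichlet condition $u_{|\Gamma}=0$, together with the regularity $u\in C^1(\Omega\cup\Gamma)$, ensures that the boundary contribution on $\Gamma$ vanishes when integrating $\int u\eta^2 \Delta u$ by parts, so the Caccioppoli inequality carries over unchanged. Stability against test functions $\varphi\in C^\infty_0(\Omega)$ is available directly from the assumption. The hypothesis that $f$ be nondecreasing is used in the $L^1$-upgrade step: it is needed to compare $u$ with auxiliary sub- or super-solutions on $B_2^+$ and to preserve the favorable sign structure of the iteration up to the flat boundary portion $\Gamma$.

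The main obstacle in both (i) and (ii) is the passage from an $L^2$ right-hand side (obtainable by standard stability$+$Caccioppoli reasoning, in the spirit of Farina's estimates) to the far stronger $L^1$ right-hand side stated in the proposition. Bridging this gap is the essential analytic content of \cite{CFRS} and uses, in a crucial way, the interplay between stability of $u$ and the pointwise inequality $-\Delta u \ge 0$; I would therefore appeal to the quantitative $H^1$ bound of \cite{CFRS} for this last step rather than attempt to rederive it.
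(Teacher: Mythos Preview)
The paper itself does not give a proof of this proposition: it simply records that the result follows from \cite[Propositions~2.5 and 5.5]{CFRS} after a dilation, and explicitly notes that the proof there is involved. So the benchmark here is a one-line citation, not an argument.

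Your proposal goes further and tries to sketch a two-stage argument: first a ``routine'' Farina-type step to reach
\[
\int_{B_{1/4}}|\nabla u|^2 \le C\|u\|_{L^2(B_{1/2})}^2,
\]
and only then invoke \cite{CFRS} for the $L^2\to L^1$ upgrade. This decomposition is misleading, and the first step does not actually go through under the sole hypothesis $f\ge 0$. Testing stability with $\varphi=u^\alpha\eta$ yields control on $\int f'(u)u^{2\alpha}\eta^2$, not on $\int f(u)u\eta^2$; passing from one to the other requires precisely the superlinearity condition $sf'(s)\ge(1+\eta)f(s)-K$ that appears in Proposition~\ref{lemstabilOUR} but is \emph{not} assumed in Proposition~\ref{lemstabilCFRS}. (Indeed, this is why the paper states two separate propositions: the elementary Farina argument gives Proposition~\ref{lemstabilOUR} under those extra growth hypotheses, while the general case with arbitrary nonnegative $f$ genuinely needs the machinery of \cite{CFRS}.) The CFRS proof does not pass through an $L^2$ intermediate at all; it uses the stability inequality with test functions built from $|\nabla u|$ and a geometric identity for the Laplacian of $|\nabla u|$ involving the curvature of the level sets of $u$, and obtains the $L^1$ right-hand side directly.

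In short: your final instinct---to appeal to \cite{CFRS}---is exactly what the paper does, but you should do so for the entire estimate, not just the last step. The Caccioppoli/Farina warm-up you describe is the content of Proposition~\ref{lemstabilOUR}, not of Proposition~\ref{lemstabilCFRS}.
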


Proposition~\ref{lemstabilCFRS} follows from \cite[Propositions~2.5 and 5.5]{CFRS} (after applying a simple dilation).
The proof is involved. Although this is not needed in our proof, a deep aspect of the result is that the constant in the right hand side
is independent of the nonlinearity $f$.

Since its proof is more elementary, we provide a second statement, which assumes at least power growth of $u$ and gives a uniform bound on the gradient of $u$ in $L^2$.

\begin{prop}
\label{lemstabilOUR}
Assume that $f\in C^1([0,\infty)$. Assume in addition that
\be\label{hyplemstab1}
 f(s)\ge cs^{1+\sigma}-K,\quad s\ge 0
\ee
 and
\be\label{hyplemstab2}
 sf'(s)\ge (1+\eta)f(s)-K,\quad s\ge 0,
\ee
for some constants $\eta,\sigma,c,K>0$.

\smallskip

(i) Let $\Omega=B_1$, $\omega=B_{1/2}$
and let $u\in C^2(\Omega)$ be a nonnegative stable solution of \eqref{hyplemstab3}.
Then we have
 \be\label{estimlemstab}
 \int_{\omega} |\nabla u|^2\, dx\le C(n,\eta,\sigma,c,K).
 \ee

(ii) Let $\Omega=B_2^+$, $\omega=B_1^+$ and $\Gamma=B_2\cap\{x_n=0\}$.
 Let $u\in C^2(\Omega)$ be a nonnegative stable solution of \eqref{hyplemstab3} satisfying the boundary condition \eqref{hyplemstab3}.
  Then estimate \eqref{estimlemstab} is true.
\end{prop}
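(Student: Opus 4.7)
The plan is to derive a super-linear bound $\int u^{2+\sigma}\chi^2 \le C$ by combining the stability inequality \eqref{defstabil} and the equation \eqref{hyplemstab3}, from which the desired $H^1$ estimate will follow by testing the equation once more. Let $\zeta\in C^\infty(\overline\Omega)$ be a standard cutoff with $\zeta\equiv 1$ on $\omega$, $\zeta\equiv 0$ near $\partial B_1$ in case (i) (resp.\ near $\partial B_2\cap\overline{\Rnp}$ in case (ii)), and $0\le\zeta\le 1$; set $\chi=\zeta^N$ with $N$ a large integer to be fixed in terms of $\sigma$. In case (ii), the boundary condition $u_{|\Gamma}=0$ together with $u\in C^1(\Omega\cup\Gamma)$ ensures $u\chi,u\chi^2\in H^1_0(\Omega)$ (one may approximate $u$ by $(u-\delta)_+$ to obtain compact support away from $\Gamma$), so these are admissible test functions in both \eqref{defstabil} and \eqref{hyplemstab3}, even though $\chi\not\equiv 0$ near $\Gamma$.

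Testing \eqref{defstabil} with $\varphi=u\chi$ and testing \eqref{hyplemstab3} with $u\chi^2$ yields two identities whose common terms $\int\chi^2|\nabla u|^2+2\int u\chi\,\nabla u\cdot\nabla\chi$ cancel upon subtraction, producing
\[
\int_\Omega \bigl(u^2 f'(u)-uf(u)\bigr)\chi^2\,dx \le \int_\Omega u^2|\nabla\chi|^2\,dx.
\]
Multiplying \eqref{hyplemstab2} by $u\ge 0$ gives $u^2 f'(u)\ge (1+\eta)uf(u)-Ku$, and multiplying \eqref{hyplemstab1} by $u$ gives $uf(u)\ge c u^{2+\sigma}-Ku$. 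Substituting these into the inequality above produces the fundamental estimate
\[
c\eta\int_\Omega u^{2+\sigma}\chi^2\,dx \le K(1+\eta)\int_\Omega u\chi^2\,dx + \int_\Omega u^2|\nabla\chi|^2\,dx.
\]

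The crucial step is to absorb the right-hand side into the left. The linear term is handled by Young's inequality: $u\chi^2\le\delta u^{2+\sigma}\chi^2+C_\delta\chi^2$. For the delicate term, since $|\nabla\chi|^2=N^2\zeta^{2N-2}|\nabla\zeta|^2$, Young's inequality with conjugate exponents $\tfrac{2+\sigma}{2}$ and $\tfrac{2+\sigma}{\sigma}$ yields
\[
u^2|\nabla\chi|^2 \le \delta\, u^{2+\sigma}\chi^2 + C_\delta\, N^{2(2+\sigma)/\sigma}\,\zeta^{2N-4/\sigma-2}\,|\nabla\zeta|^{2(2+\sigma)/\sigma}.
\]
Choosing $N$ so that $2N-4/\sigma-2\ge 0$ makes the remainder uniformly bounded on $\Omega$ by a constant depending only on $n,\sigma$, and the fixed cutoff $\zeta$. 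Fixing such $N$ and taking $\delta$ small then gives $\int_\Omega u^{2+\sigma}\chi^2\le C(n,\eta,\sigma,c,K)$. Finally, testing \eqref{hyplemstab3} with $u\chi^2$ and applying Cauchy--Schwarz produces $\frac12\int|\nabla u|^2\chi^2\le\int uf(u)\chi^2+2\int u^2|\nabla\chi|^2$, and both terms on the right are already controlled---the first via the intermediate inequality $\eta\int uf(u)\chi^2\le K\int u\chi^2+\int u^2|\nabla\chi|^2$ (note that no upper bound on $f$ is needed), the second by the Young estimate---hence $\int_\omega|\nabla u|^2\le\int_\Omega|\nabla u|^2\chi^2\le C$.

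The main technical hurdle is the calibration of the cutoff $\chi=\zeta^N$: the stability inequality naturally produces the term $\int u^2|\nabla\chi|^2$, whose absorption into $\int u^{2+\sigma}\chi^2$ requires compensating the mismatch between $\chi^2$ and $|\nabla\chi|^2$ near the boundary of support of $\zeta$. Raising $\zeta$ to a sufficiently high power, explicitly $N\ge 1+2/\sigma$, is exactly what makes the quantity $\zeta^{-4/\sigma}|\nabla\chi|^{2(2+\sigma)/\sigma}$ bounded, enabling the Young-type absorption. The boundary considerations in case (ii) are comparatively routine, being handled by the fact that $u$ itself vanishes on $\Gamma$.
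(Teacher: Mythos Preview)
Your proof is correct and follows essentially the same strategy as the paper: combine the stability inequality (tested with $u\chi$) and the equation (tested with $u\chi^2$), use hypotheses \eqref{hyplemstab1}--\eqref{hyplemstab2} to extract $\int u^{2+\sigma}\chi^2$, and absorb the lower-order terms via Young's inequality after choosing the cutoff power $N\ge 1+2/\sigma$ (which is exactly the paper's choice $q=(2+\sigma)/\sigma$). Your exact-cancellation step is a mild streamlining of the paper's $(1+\eps)$-argument, but the structure and ideas are the same.
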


Note that Proposition~\ref{lemstabilOUR} can be applied with the Lane-Emden nonlinearity $f(u)=u^p$, $p>1$, as well as $f(u)=e^u-1$.
The proof of Proposition~\ref{lemstabilOUR} is by now standard, see in particular~\cite{F}, and {simpler than that of Proposition~\ref{lemstabilCFRS}}.
We provide all the details for convenience of the reader.
\smallskip

\begin{proof}[Proof of Proposition~\ref{lemstabilOUR}]
Fix $q\ge 1$ and $\eps>0$ to be chosen below.
 Set $\rho=1$ (resp., $\rho=2$) in case of assertion (i) (resp., (ii)) and let $\varphi\in C^\infty_0(B_\rho)$ 
 be such that $\varphi=1$ on $B_{\rho/2}$
and $0\le \varphi\le 1$.
By density (using \eqref{hyplemstab4} in case of assertion (ii)), we may take $u\varphi^q$ as test-function in the stability inequality \eqref{defstabil}.
Denoting $\int=\int_\Omega$ and using Young's inequality, this yields
$$
\begin{aligned}
\int f'(u)u^2\varphi^{2q}
&\le \int|\nabla(u\varphi^{q})|^2
\le (1+\eps)\int \varphi^{2q}|\nabla u|^2 + C_\eps q^2\int u^2\varphi^{2q-2}|\nabla\varphi|^2
\end{aligned}$$
hence, by \eqref{hyplemstab2},
\be\label{lemstabeq1}
(1+\eta)\int f(u)u\varphi^{2q}
\le (1+\eps)\int \varphi^{2q}|\nabla u|^2 + C_\eps q^2\int u^2\varphi^{2q-2}|\nabla\varphi|^2
+K\int u\varphi^{2q}.
\ee
(Here and below $C_\eps$ denotes generic positive constants depending only on $\eps$.)
On the other hand, testing equation \eqref{hyplemstab3} with $u\varphi^{2q}$
(using \eqref{hyplemstab4} in case of assertion (ii)), we get
$$\int f(u)u\varphi^{2q} =\int\nabla u\cdot\nabla (u\varphi^{2q})
=\int\varphi^{2q}|\nabla u|^2 +2q\int u\varphi^{2q-1}\nabla u\cdot\nabla\varphi $$
hence, by Young's inequality,
$$\begin{aligned}
\int\varphi^{2q}|\nabla u|^2
&\le \int f(u)u\varphi^{2q} +\frac{\eps}{1+\eps}\int \varphi^{2q}|\nabla u|^2 +q^2C_\eps  \int u^2\varphi^{2q-2}|\nabla\varphi|^2,
\end{aligned}$$
which implies
\be\label{lemstabeq2}
\int\varphi^{2q}|\nabla u|^2
\le (1+\eps)\int f(u)u\varphi^{2q} +q^2C_\eps  \int u^2\varphi^{2q-2}|\nabla\varphi|^2.
\ee
Combining \eqref{lemstabeq1} and  \eqref{lemstabeq2}, we obtain
$$(1+\eta)\int f(u)u\varphi^{2q}
\le (1+\eps)^2\int f(u)u\varphi^{2q}+C_\eps q^2\int u^2\varphi^{2q-2}|\nabla\varphi|^2+K\int u\varphi^{2q}.$$
Taking $\eps=(1+\frac{\eta}{2})^{1/2}-1$ and using \eqref{hyplemstab2},
$0\le \varphi\le 1$, $|\nabla \varphi|\le C$, and Young's inequality, we obtain
$$\begin{aligned}
\frac{c\eta}{2}\int u^{2+\sigma}\varphi^{2q}
&\le \frac{\eta}{2}\int f(u)u\varphi^{2q} +\frac{\eta K}{2}\int u\varphi^{2q}
\le C q^2\int u^2\varphi^{2q-2}|\nabla\varphi|^2+C\int u\varphi^{2q} \\
&\le C+C(1+q^2) \int u^2\varphi^{2q-2}
\le \frac{c\eta}{4}\int u^{2+\sigma}\varphi^{(2+\sigma)(q-1)} +  C(1+q^2)^{\frac{2+\sigma}{\sigma}}.
\end{aligned}$$
Here, and in the rest of the proof, $C>0$ denotes a generic constant depending only on $\sigma,\eta,n,c,K$.
Now choosing $q=(2+\sigma)/\sigma$, so that $(2+\sigma)(q-1)=2q$, it follows from the last chain of inequalities that
$\int u^{2+\sigma}\varphi^{2q} \le C$
and then that
$$\int f(u)u\varphi^{2q} +\int u^2\varphi^{2q-2}|\nabla\varphi|^2\le C.$$
Going back to \eqref{lemstabeq2}, we deduce that $\int \varphi^{2q}|\nabla u|^2\le C$.
Since $\varphi=1$ on $B_{\rho/2}$, estimate \eqref{estimlemstab} follows.
\end{proof}

\section{Auxiliary results}

In this section we prove two lemmas which will be instrumental in the proof of Theorem~\ref{mainthm7}.
We start with our key test-function argument.

\begin{lem}
\label{lemdiv}
Consider the diffusion operator over $\Rnp$ given by
$$
\mathcal{L}=\frac1A\nabla\cdot(A\nabla \;),
$$
where $A\in L^\infty_{loc}(\overline{\Rnp})$, $A>0$ a.e. in $\Rnp$.
Let $d\mu=A\,dx$ denote the reversible invariant measure associated to $\mathcal{L}$.
Let $q>1$ and assume that $\xi\in H^1_{loc}\cap C(\overline{\Rnp})$ is a weak solution of
\be\label{pbmDirichletLemma}
-\mathcal{L}\xi\ge  (\xi_-)^q \quad \mbox{ in }\;\Rnp,
\ee
with $\xi\ge 0$ on $\partial\Rnp$.
Then, there exists a constant $C=C(n,q)$ such that for all $R>1$ and $m \ge \frac{q+1}{q-1}$, we have
\be\label{estimDirichletLemma}
\Vert \xi_{-}^{q-1}\Vert_{L^m(B_R^+,d\mu)}\le C\frac{m}{R^2} \bigl[\mu(B_{2R}^+\setminus B_R^+)\bigr]^{1/m}
\ee
\end{lem}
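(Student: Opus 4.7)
The plan is to run a single quantitative Caccioppoli-type test-function estimate, directly exploiting the nonlinearity $(\xi_-)^q$ on the right of \eqref{pbmDirichletLemma}. Set $v:=\xi_-$. Since $\xi\ge 0$ on $\partial\Rnp$ and $\xi$ is continuous, $v$ vanishes on $\partial\Rnp$, and by the Stampacchia chain rule $\nabla v=-\mathbf{1}_{\{\xi<0\}}\nabla\xi$ a.e. The weak form of \eqref{pbmDirichletLemma}, tested against a nonnegative compactly supported $\varphi\in H^1_0(\Rnp)$, reads
$$\int_{\Rnp} A\,\nabla\xi\cdot\nabla\varphi\,dx \ge \int_{\Rnp} A\,v^q\varphi\,dx.$$
I would take $\varphi=v^s\eta^\alpha$, where $s\ge 1$ and $\alpha\ge 2$ are parameters to be optimized and $\eta\in C^\infty_0(B_{2R})$ is a standard cutoff with $\eta\equiv 1$ on $B_R$, $0\le\eta\le 1$, $|\nabla\eta|\le C/R$. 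The boundary vanishing of $v$ together with $\mathrm{supp}\,\eta\subset B_{2R}$ makes $\varphi$ admissible.

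Using $\nabla\xi\cdot\nabla v=-|\nabla v|^2$ a.e., the identity
$$\nabla\xi\cdot\nabla\varphi=-sv^{s-1}|\nabla v|^2\eta^\alpha-\alpha v^s\eta^{\alpha-1}\nabla v\cdot\nabla\eta,$$
combined with Young's inequality to absorb the cross term into the first term (at the cost of an $\alpha^2/s$ factor), produces the Caccioppoli-type bound
$$\int A v^{q+s}\eta^\alpha\,dx \le \frac{C\alpha^2}{s}\int A v^{s+1}\eta^{\alpha-2}|\nabla\eta|^2\,dx.$$

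Next I would apply H\"older's inequality on the right with conjugate exponents $\tfrac{q+s}{s+1}$ and $\tfrac{q+s}{q-1}$, splitting $v^{s+1}\eta^{\alpha-2}|\nabla\eta|^2$ so that the first factor reproduces $\int A v^{q+s}\eta^\alpha$. Choosing $\alpha=2(q+s)/(q-1)$ kills the exponent of $\eta$ in the second factor. Since $|\nabla\eta|\le C/R$ and $\nabla\eta$ is supported in $B_{2R}^+\setminus B_R^+$, after absorbing the self-similar factor this yields
$$\Bigl(\int A v^{q+s}\eta^\alpha\,dx\Bigr)^{(q-1)/(q+s)} \le \frac{C\alpha^2}{sR^2}\,[\mu(B_{2R}^+\setminus B_R^+)]^{(q-1)/(q+s)}.$$

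Finally, I would set $s=(q-1)m-q$, which forces $q+s=(q-1)m$; the constraint $s\ge 1$ is exactly the hypothesis $m\ge (q+1)/(q-1)$. Then $\alpha=2m$, and a case split on $m$ near or far from $(q+1)/(q-1)$ shows $\alpha^2/s\le C(q)\,m$ throughout the admissible range. Taking $m$-th roots and using $\eta\equiv 1$ on $B_R^+$ delivers \eqref{estimDirichletLemma}. The only mildly delicate technical point I anticipate is rigorously justifying $v^s\eta^\alpha$ as a test function when $s$ is not an integer and $v$ is only $H^1_{loc}$; this is handled by a standard truncation $v_k=\min(v,k)$, replacing $v^s$ by $v_k^s$, testing, and passing to the limit $k\to\infty$ by monotone convergence on the right and dominated convergence on the gradient terms.
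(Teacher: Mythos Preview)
Your proof is correct and follows essentially the same route as the paper's argument: test the inequality with a power of $\xi_-$ times a power of a cutoff, use Young's inequality to absorb the gradient cross term, apply H\"older's inequality to obtain a self-similar estimate, and then choose the exponents to close. Your parameters correspond to the paper's via $s=2\alpha_{\text{paper}}-1$ and $\eta^{\alpha_{\text{student}}}=\psi^{2m}$, leading to the same choice $s=(q-1)m-q$ and the same final bound $C(q)\,m/R^2$ on the constant.
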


\begin{rem} \label{remauxil}
(i) Here a weak solution of \eqref{pbmDirichletLemma} is understood in the following sense
$$\int_{\Rnp} A (\xi_-)^q\phi \le \int_{\Rnp} A\nabla \xi\cdot \nabla\phi,$$
for all $\phi\in H^1_0(\Rnp)$ with $\phi\ge 0$ and ${\rm Supp}(\phi)\subset\subset\overline{\Rnp}$.
\smallskip

(ii) Estimate \eqref{estimDirichletLemma} is essentially optimal; see Remark~\ref{remauxil2} below.
\end{rem}

\begin{proof} Set $\theta=q-1>0$ and %
denote $\int=\int_{\Rnp}$ for simplicity. Fix $\alpha\ge 1$, and let $\varphi\in C^\infty(\overline{\Rnp})$ have compact support.
Since $\xi\ge0$ on $\partial\Rnp$, we may test \eqref{pbmDirichletLemma} with $\phi=(\xi_-)^{2\alpha-1}\varphi^2$.
 Using $\nabla \xi_-=-\chi_{\{\xi<0\}}\nabla \xi$ a.e., this yields
$$
\begin{aligned}
\int A(\xi_-)^{2\alpha+\theta}\varphi^2
&\le \int A\nabla \xi\cdot \nabla\bigl[(\xi_-)^{2\alpha-1}\varphi^2\bigr]\\
&= \int A(\xi_-)^{2\alpha-1}\nabla \xi\cdot \nabla(\varphi^2)
 -(2\alpha-1)\int A(\xi_-)^{2\alpha-2}|\nabla \xi_-|^2 \varphi^2.
\end{aligned}
$$
In addition, by Young's inequality, we have
$$
\begin{aligned}
\int A(\xi_-)^{2\alpha-1}\nabla \xi\cdot \nabla(\varphi^2)
&=-2\int A(\xi_-)^{2\alpha-1}\varphi (\nabla \xi_-)\cdot \nabla\varphi\\
&=-2\int A\sqrt{2\alpha-1}(\xi_-)^{\alpha-1}\varphi(\nabla \xi_-)\cdot \frac{(\xi_-)^\alpha\nabla\varphi}{\sqrt{2\alpha-1}}\\
&\le (2\alpha-1)\int A(\xi_-)^{2\alpha-2}\varphi^2|\nabla \xi_-|^2+\frac{1}{2\alpha-1}\int A (\xi_-)^{2\alpha}|\nabla\varphi|^2.
\end{aligned}
$$
Consequently,
$$\int A(\xi_-)^{2\alpha+\theta}\varphi^2
\le \frac{1}{2\alpha-1}\int A (\xi_-)^{2\alpha}|\nabla\varphi|^2.$$
Let us now choose $\varphi$ of the form $\varphi=\psi^m$, with $m\ge \frac{q+1}{q-1}$ and $0\le \psi\in C^\infty(\overline{\Rnp})$ with compact support.
By H\"older's inequality,
$$
\begin{aligned}
\int A(\xi_-)^{2\alpha+\theta} \psi^{2m}
&\le \frac{m^2}{2\alpha-1}\int A (\xi_-)^{2\alpha}|\nabla\psi|^2\psi^{2m-2} \\
&\le \frac{m^2}{2\alpha-1}\Bigl(\int A (\xi_-)^{2\alpha+\theta}\psi^{\frac{(m-1)(2\alpha+\theta)}{\alpha}}\Bigr)^{\frac{2\alpha}{2\alpha+\theta}}
\Bigl(\int A |\nabla\psi|^{\frac{2(2\alpha+\theta)}{\theta}}\Bigr)^{\frac{\theta}{2\alpha+\theta}}.
\end{aligned}
$$
 Now we choose $\alpha=\theta(m-1)/2\ge 1$, hence $2\alpha+\theta=\theta m$ and $\frac{(m-1)(2\alpha+\theta)}{\alpha}=2m$, so we get
$$\Bigl(\int A(\xi_-)^{\theta m} \psi^{2m}\Bigr)^{\frac{1}{m}}\le \frac{m^2}{\theta(m-1)-1}\Bigl(\int A |\nabla\psi|^{2m}\Bigr)^{\frac{1}{m}}.$$
Fix a radially symmetric function $\psi_1\in C^\infty_0(\R^n)$ such that $\psi_1\ge 0$, with $\psi_1(y)=1$ for $|y|\le 1$ and $\psi_1(y)=0$ for $|y|\ge 2$.
Let $R>1$ and set in the last inequality $\psi(x)=\psi_1(x/R)$ for $x\in \overline{\rnp}$. We obtain
$$\Bigl(\int_{B_R^+} A(\xi_-)^{m(q-1)}\Bigr)^{\frac{1}{m}}\le {\frac{\|\nabla\psi_1\|_\infty^2}{R^2}}
\frac{m^2}{m(q-1)-q}\Bigl(\int_{B_{2R}^+\setminus B_R^+} A \Bigr)^{\frac{1}{m}},$$
and \eqref{estimDirichletLemma} follows.
\end{proof}

The second lemma provides a basic local $L^1$ estimate for the solution
and guarantees the nonexistence of solutions that are monotone and convex in the $x_n$ direction.

\begin{lem}
\label{mainthm3a}
Let $f:(0,\infty)\to \R$ be such that
\be\label{hypf2}
f(s)\ge c_0 s-c_1,\quad s>0,
\ee
for some $c_0, c_1>0$. Then:

\vskip 1pt

(i) There exists $R_0=R_0(c_0,n)>0$
such that any
positive supersolution of $-\Delta u =f(u)$ in~$\overline B_{2R_0}(x_0)$ satisfies
$$\int_{B_{R_0}(x_0)}u(x)\,dx\le C=C(c_0,c_1,n).$$

(ii) Let $R_0$ be given by assertion (i) and let $u$ be a positive supersolution of problem \eqref{pbmDirichletHalfspace2}
such that $u_{x_n}\ge 0$.
Then, for every $x_0\in \overline\Rnp$, we have
$$\int_{B_{R_0}(x_0)\cap \Rnp}u(x)\,dx\le C=C(c_0,c_1,n).$$

(iii) Problem \eqref{pbmDirichletHalfspace2}
does not admit any supersolution $u\in C^2(\overline\Rnp)$ such that $u_{x_n}>0$ and $u_{x_nx_n}\ge 0$ in $\overline\Rnp$.
\end{lem}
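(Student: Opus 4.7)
\textbf{Proof plan for Lemma~\ref{mainthm3a}.}
For assertion (i) the plan is the classical first-eigenfunction test. Let $\varphi_1>0$ denote the first Dirichlet eigenfunction of $-\Delta$ on $B_{2R_0}(x_0)$, with eigenvalue $\lambda_1(B_{2R_0})=c_n R_0^{-2}$. Multiplying the supersolution inequality $-\Delta u\ge f(u)\ge c_0 u-c_1$ by $\varphi_1$ and integrating by parts twice, the boundary term $\int_{\partial B_{2R_0}(x_0)} u\,\partial_\nu\varphi_1\,dS$ is nonpositive (since $u>0$ on $\partial B_{2R_0}(x_0)$ and $\partial_\nu\varphi_1\le 0$ by Hopf), so
\[
c_0\int u\varphi_1 - c_1\int \varphi_1 \le \int \varphi_1 f(u)\le \int \varphi_1(-\Delta u) \le \lambda_1\int u\varphi_1
\]
(all integrals over $B_{2R_0}(x_0)$). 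Fixing $R_0=R_0(c_0,n)$ so that $\lambda_1\le c_0/2$ gives $\int u\varphi_1\le (2c_1/c_0)\int\varphi_1$. Since $\varphi_1$ admits a positive lower bound on $B_{R_0}(x_0)$ depending only on $n$ and $R_0$, the claimed $L^1$ estimate follows.

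For assertion (ii) I reduce to (i) by a vertical shift. For $x_0\in\overline{\Rnp}$, set $y_0:=x_0+2R_0 e_n$, so that $B_{2R_0}(y_0)\subset\overline{\Rnp}$; by (i), $\int_{B_{R_0}(y_0)} u\le C$. For any $x\in B_{R_0}(x_0)\cap\Rnp$, the translate $x+2R_0 e_n$ lies in $B_{R_0}(y_0)$, and $u_{x_n}\ge 0$ yields $u(x)\le u(x+2R_0 e_n)$. A change of variables then gives $\int_{B_{R_0}(x_0)\cap\Rnp}u\le \int_{B_{R_0}(y_0)}u\le C$.

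For assertion (iii) I argue by contradiction. Suppose such a supersolution $u$ exists. Fix any $x_0'\in\R^{n-1}$. Since $u\in C^2(\overline{\Rnp})$ and $u_{x_n}(x_0',0)>0$, there exist $\delta,r>0$ with $u_{x_n}(x',0)\ge\delta$ for every $x'$ in the $(n{-}1)$-dimensional ball $\mathcal{B}'(x_0',r)$. Combined with $u_{x_nx_n}\ge 0$, this gives $u_{x_n}(x',t)\ge\delta$ for all $t\ge 0$ and such $x'$; integrating in $t$ and using the Dirichlet condition $u(x',0)=0$ yields
\[
u(x',t)\ge \delta t,\qquad x'\in\mathcal{B}'(x_0',r),\ t\ge 0.
\]
Set $\rho:=\min(R_0,r)$ and $x_T:=(x_0',T)$ for $T>\rho$. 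Then $B_\rho(x_T)\subset B_{R_0}(x_T)\cap\Rnp$, and on $B_\rho(x_T)$ one has $x'\in\mathcal{B}'(x_0',r)$ and $x_n\ge T-\rho$, so $u(x)\ge\delta(T-\rho)$. Hence $\int_{B_{R_0}(x_T)\cap\Rnp}u\ge \delta(T-\rho)\,|B_\rho|\to\infty$ as $T\to\infty$, contradicting (ii). The main obstacle is part (iii), specifically coordinating the three scales $r$, $R_0$ and the base height $T$; this resolves transparently once the linear lower bound $u(x',t)\ge\delta t$ is extracted from monotonicity plus convexity.
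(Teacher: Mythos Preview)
Your proof is correct and follows essentially the same approach as the paper: first-eigenfunction testing for (i), vertical translation by $2R_0 e_n$ for (ii), and extracting a linear lower bound $u(x',t)\ge\delta t$ from monotonicity plus convexity in $x_n$ to contradict the $L^1$ bound for (iii). The only cosmetic difference is that in (iii) the paper takes the horizontal disk of radius exactly $R_0$ around the origin (so no auxiliary $\rho=\min(R_0,r)$ is needed and (i) can be applied directly to $B_{R_0}(se_n)$), whereas you pick an arbitrary base point $x_0'$ and a possibly smaller radius $r$; both routes lead to the same contradiction.
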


 \begin{rem} If $f(s)/s\to \infty$ as $s\to \infty$, the $L^1$-bound in (ii) remains true even if the hypothesis $u_{x_n}\ge0$ does not hold, see Theorem 3 in \cite{S2}.
\end{rem} 

\begin{proof}
(i) Take $R_0>0$ so that the principal
eigenvalue $\lambda_1(2R_0)$ of the Dirichlet Laplacian on $B_{2R_0}$ satisfies
$\lambda_1(2R_0)=c(n)(2R_0)^{-2}=c_0/2$.
The conclusion then follows by using assumption \eqref{hypf2} and testing $-\Delta u =f(u)$ in $B_{2R_0}$ with the corresponding first eigenfunction $\varphi_1$, which is such that $\varphi_1\ge \delta(R_0)>0$ in $B_{R_0}$, by the Harnack inequality.
\smallskip

(ii) This follows from assertion (i) by noting that, owing to $u_{x_n}\ge 0$,
$$\int_{B_{R_0}(x_0)\cap \Rnp}u(x)\,dx\le \int_{B_{R_0}(2R_0e_n+x_0)}u(x)\,dx \le C(c_0,c_1,n).$$

(iii) Assume for contradiction that $u\in C^2(\overline\Rnp)$ is a solution
such that $u_{x_n}>0$ and $u_{x_nx_n}\ge 0$ in $\overline\Rnp$. Let $R_0$ be the number from (i).
Our assumptions imply that
$$\eta:=\inf \bigl\{ u_{x_n}(x',0);\ x'\in\R^{n-1},\ |x'|<R_0\bigr\}>0.$$
Since $u_{x_nx_n}\ge 0$, it follows that
$$u(x',y)\ge \eta y\quad\hbox{ for all $y>0$ and $x'\in\R^{n-1}$ such that $|x'|<R_0$.}$$
Hence by (i), for all $s>R_0$,
$$C\ge \int_{B_{R_0}(se_n)}u(x)\,dx \ge \omega_nR_0^n\eta (s-R_0),\quad s>R_0.$$
But this is a contradiction, for sufficiently large $s$.
\end{proof}

\section{Proof of Theorems \ref{mainthmup} and \ref{mainthm7}}

Theorem \ref{mainthmup} is a special case of Theorem \ref{mainthm7}.
The proof of Theorem \ref{mainthm7} is done in three steps.

\begin{proof}[Proof of Theorem \ref{mainthm7}]

\

\smallskip

{\bf Step~1.} {\it Preliminaries.}
Since $f\in C^1([0,\infty))\cap C^2(0,\infty)$, by elliptic regularity we have
$u\in W^{3,s}_{loc}(\overline{\Rnp})\cap W^{4,s}_{loc}(\Rnp)$ for all $s\in(1,\infty)$.
Set
$$v:=u_{x_n}.$$
Since $v\ge 0$ by assumption, we have $v>0$ on $\partial\Rnp$ by the Hopf lemma
and $v>0$ in $\Rnp$ by the strong maximum principle applied to the equation
$$-\Delta v=f'(u)v.$$
We set
$$w:=u_{x_nx_n},\quad z=(1+x_n)v,\quad x\in \R^n,$$
and define the 
auxiliary function
\be\label{def-xi}
\xi:=\frac{w}{z}=\frac{u_{x_nx_n}}{(1+x_n)u_{x_n}},\qquad x\in \overline{\Rnp}.
\ee

{\bf Step~2.} {\it Elliptic inequality for $\xi$.}
We first claim that $\xi$ is a (strong, hence weak) solution of
\be\label{S-CLZeqnxi}
-\nabla\cdot(z^2\nabla \xi)\ge 2z^2\xi^2,\qquad x\in \Rnp 
\ee
and
\be\label{S-CLZeqnxi2}
\xi=0,\quad x\in\partial\Rnp.
\ee

Indeed, we have
$$z^2\nabla \xi=z^2\nabla\Bigl(\frac{w}{z}\Bigr)
=z\nabla w-w\nabla z,$$
hence
$$-\nabla\cdot(z^2\nabla \xi)=w\Delta z-z\Delta w.$$
Since, on the other hand,
$$-\Delta w=f'(u)w+f''(u)v^2$$
and
$$-\Delta z=-(1+x_n)\Delta v-2v_{x_n}=f'(u)z-2w,$$
we deduce that
$$
\begin{aligned}
-\nabla\cdot(z^2\nabla \xi)
&=w(-f'(u)z+2w)+z(f'(u)w+f''(u)v^2)\\
\noalign{\vskip 1mm}
&=2w^2+f''(u)zv^2=2z^2\xi^2+(1+x_n)f''(u)v^3.
\end{aligned}
$$
Since $f''\ge 0$ and $v>0$, inequality \eqref{S-CLZeqnxi} follows.

As for \eqref{S-CLZeqnxi2}, it is a consequence of $u=0$ and $u_{x_nx_n}=\Delta u=-f(0)=0$ on $\partial\Rnp$.

\smallskip
{\bf Step~3.} {\it Convexity in the normal direction and conclusion}.
We claim that
\be\label{S-uxnxn}
u_{x_nx_n}\ge0 \qquad \mbox{in }\;\rnp.
\ee

First of all, it follows from \eqref{S-CLZeqnxi}, \eqref{S-CLZeqnxi2} and Lemma~\ref{lemdiv}
with $q=2$ and $A=z^2$ that, for all $R>1$ and $m \ge 3$,
\be\label{estimLemmaLouis}
\Bigl(\int_{B_R^+} z^2(\xi_-)^m\Bigr)^{1/m}\le C(n) {m\over R^2}\Bigl(\int_{B_{2R}^+\setminus B_R^+} z^2\Bigr)^{1/m}.
\ee

We proceed to estimate the right-hand side of \eqref{estimLemmaLouis}.
By Proposition~\ref{propstable},
$u$ is a stable solution on any subdomain of $\Omega\subset\Rnp$.
 In addition, the hypothesis \eqref{hypf2} of Lemma~\ref{mainthm3a} is clearly
satisfied by any nonnegative convex function such that  $f(0)=0$ and $f\not\equiv0$.
For instance, if $f(t_0)>0$ for some $t_0>0$, we have
$$
f(t)\ge \frac{f(t_0)}{t_0}\, t- f(t_0), \quad t>0,
$$
because  $(f(t)-f(0))/t$ is nondecreasing in $t$.
It then follows from  Proposition~\ref{lemstabilCFRS} and Lemma~\ref{mainthm3a}(i)-(ii) that 
\be\label{intestimate2a}
\int_{\omega} |\nabla u|^2 \le C(n,f),
\ee
for any set $\omega$ which is either of the form $B_{1/4}(a)$ with
$a\in\Rnp\setminus \Sigma_{1/2}$ or $B_{1}(a)\cap \Rnp$ with $a\in\partial\Rnp$.

We note that in the case of Theorem~\ref{mainthmup}, namely $f(u)=u^p$ with $p>1$
(or more generally for $f$ satisfying assumptions \eqref{hyplemstab1}-\eqref{hyplemstab2}), 
estimate \eqref{intestimate2a} follows from Proposition~\ref{lemstabilOUR} above, and the more difficult Proposition~\ref{lemstabilCFRS}
is not needed.

Now, for any $R>1$, $B_{2R}^+\setminus \Sigma_{1/2}$ can be covered by at most $C(n)R^n$ balls
of radius $1/4$ and centered in $\Rnp\setminus \Sigma_{1/2}$,
while
$B_{2R}^+\cap\Sigma_{1/2}$ can be covered by at most $C(n)R^{n-1}$ sets of the form $B_{1}(a)\cap \Rnp$,
with $a\in\partial\Rnp$.
Consequently,
$$
\begin{aligned}
\int_{B_{2R}^+} z^2
&=\int_{B_{2R}^+} (1+x_n)^2(u_{x_n})^2 \\
&\le (1+2R)^2 \int_{B_{2R}^+\setminus \Sigma_{1/2}} |\nabla u|^2
+\frac94\int_{B_{2R}^+\cap\Sigma_{1/2}} |\nabla u|^2 \\
&\le C(n,f)(R^{n+2} + R^{n-1})\le CR^{n+2},\quad R>1.
\end{aligned}$$
For any $m\ge 3$, setting $R=m$ in \eqref{estimLemmaLouis}, we get for all $D\ge 3$ and $m\ge D$ that
\be\label{intestimate3}
\Bigl(\int_{B_D^+} z^2(\xi_-)^m\Bigr)^{1/m}\le {Cm^{(n+2)/m}\over m}\to 0,\quad m\to \infty.
\ee
But since $z>0$ in $\overline{B_D}$, for each $D\ge 3$ the left-hand side of \eqref{intestimate3} converges to $\|\xi_-\|_{L^\infty(B_D^+)}$ as $m\to\infty$.
It follows that $\xi\ge 0$ in $\Rnp$, hence \eqref{S-uxnxn} holds.
 In view of \eqref{S-uxnxn}, Theorem~\ref{mainthm7} follows from Lemma~\ref{mainthm3a}(iii).
\end{proof}

\begin{rem} \label{remauxil2}
In the last part of the proof of Theorem~\ref{mainthm7}, we have 
combined Lemma~\ref{lemdiv} with the available information of polynomial growth of the measure associated with the weight $A=z^2$. 
Under the hypotheses of Lemma~\ref{lemdiv}, one can actually show a maximum principle 
assuming a weaker, almost Gaussian, growth condition for the weight $A$.
Namely, if 
\be\label{growthcondmu}
\log\bigl[\mu(B_R^+)\bigr]=o(R^2),\quad R\to\infty,
\ee
then any weak solution $\xi$ of \eqref{pbmDirichletLemma} with $\xi\ge 0$ on $\partial\Rnp$ satisfies $\xi\ge 0$ a.e.~in $\Rnp$.
This follows by taking $m=\eps R^2$ in estimate \eqref{estimDirichletLemma} (for each given $\eps>0$) and letting $R\to\infty$.
Moreover, assumption \eqref{growthcondmu} is essentially optimal, as shown by the counter-example:
$$A(x)=\exp\bigl[(x_n)^k\bigr],\quad \xi=-x_n, \quad\hbox{with $k>2$ and $q=k-1$}.$$
This counter-example also shows the optimality of estimate \eqref{estimDirichletLemma}: taking $m=R^k$, we see that
both sides of \eqref{estimDirichletLemma} behave like $R^{k-2}$ for $R\gg 1$ up to multiplicative constants.
Condition~\eqref{growthcondmu} has a natural geometric interpretation as a volume growth condition on $\Rnp$
equipped with the metrics associated with the operator $\mathcal{L}$.
 Lemma~\ref{lemdiv} can be extended to diffusion operators in more general settings.
 These ideas and their applications will be taken up elsewhere.
\end{rem}

\section{Appendix: the case $f(0)>0$.}

Let $f:[0,\infty)\to \R$ be a convex function such that $f(0)>0$.
The purpose of this appendix is to show that the Liouville property
(i.e., the nonexistence of nontrivial nonnegative solutions)
for problem \eqref{pbmDirichletHalfspace2} can be completely classified.
Some of the results below do not actually require the convexity of~$f$ and/or are valid for supersolutions.

\smallskip

First of all, the following simple result allows us to reduce the problem to the case $f>0$.

\begin{prop} \label{propappendix1}
Let $f:[0,\infty)\to \R$ be continuous and satisfy $f(0)>0$ and $f(a)=0$ for some $a>0$.
Then there exists a positive, bounded, increasing solution $u\in C^2([0,\infty))$ of the one-dimensional problem
\be\label{pbmODEHalfspace}
\left\{\begin{array}{llll}
\hfill -u''&=&f(u),&\quad x>0
\vspace{1mm} \\
\hfill u(0)&=&0.
\end{array}
\right.
\ee
\end{prop}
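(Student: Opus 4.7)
The plan is to construct the solution explicitly via energy conservation and separation of variables. First I replace $a$ by the smallest positive zero $b$ of $f$: this is well defined because $f$ is continuous with $f(0)>0$, so the closed set $\{s>0:f(s)=0\}$ is bounded below by a positive constant and is nonempty by hypothesis. By minimality of $b$ and continuity, $f>0$ on $[0,b)$ and $f(b)=0$; hence the primitive $F(t):=\int_0^t f(s)\,ds$ is $C^1$, strictly increasing on $[0,b]$, with $F(0)=0<F(b)$.

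Multiplying the equation $-u''=f(u)$ by $u'$ yields formally the conservation law $\tfrac12(u')^2+F(u)\equiv C$. Choosing $C=F(b)$ targets a rising trajectory from $u=0$ toward $u=b$, along which $u'=\sqrt{2(F(b)-F(u))}$. Separating variables, I define
\[
G(t):=\int_0^t \frac{ds}{\sqrt{2(F(b)-F(s))}}, \qquad t\in[0,b),
\]
which is $C^1$ and strictly increasing on $[0,b)$ with $G(0)=0$, since the integrand is continuous and positive there. Set $T:=\lim_{t\to b^-}G(t)\in(0,+\infty]$ and $u:=G^{-1}$ on $[0,T)$. Then $u(0)=0$, $0<u<b$ on $(0,T)$, $u$ is $C^1$, and $u'=\sqrt{2(F(b)-F(u))}>0$. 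Differentiating this identity once more and using $F'=f$ gives $u''=-f(u)$, so $u\in C^2([0,T))$ solves the ODE and is positive, bounded by $b$, and strictly increasing on $(0,T)$.

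To finish I extend $u$ to all of $[0,\infty)$. If $T=+\infty$ there is nothing to do. If $T<+\infty$, then $u(T^-)=b$ and $u'(T^-)=\lim_{s\to b^-}\sqrt{2(F(b)-F(s))}=0$, and I set $u\equiv b$ on $[T,\infty)$. The extension is $C^2$ across $x=T$ because $u''(T^-)=-f(u(T^-))=-f(b)=0=u''(T^+)$.

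The step requiring the most care is this extension in the second case: the dichotomy $T<+\infty$ versus $T=+\infty$ depends on the modulus of continuity of $f$ near $b$ (the integral defining $G$ is infinite at $b$ when $f$ is, say, Lipschitz at $b$, but can be finite for weaker regularity). In both situations the above construction, together with the trivial constant extension when $T<+\infty$, delivers a positive, bounded, nondecreasing $C^2$ solution on $[0,\infty)$. Note that no convexity of $f$ is used in the argument.
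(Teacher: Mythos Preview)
Your approach is essentially the same as the paper's: both rely on the energy relation $\tfrac12(u')^2+F(u)=F(b)$, where $b$ is the smallest positive zero of $f$. The paper phrases this as an initial value problem with $u(0)=0$, $u'(0)=\sqrt{2F(b)}$ and then argues that $u'$ can never vanish by invoking local uniqueness at the rest point $(b,0)$; you instead invert the time map $G$ explicitly and, if $T<\infty$, glue on the constant $b$.

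Your version is in fact more careful on the one delicate point. For merely continuous $f$ the paper's appeal to local uniqueness at $(b,0)$ is not justified: if, say, $f(s)\sim (b-s)^{1/2}$ near $b$, then $F(b)-F(s)\sim c(b-s)^{3/2}$, the integral defining $G$ converges at $b$, and the trajectory genuinely reaches $b$ in finite time---non-uniqueness at $(b,0)$ really occurs. Your constant extension handles this case cleanly and the $C^2$ gluing is correct, at the price of producing only a nondecreasing rather than strictly increasing solution. This discrepancy with the word ``increasing'' in the statement is harmless in the appendix's intended convex setting, where $f$ is locally Lipschitz near $b$, hence $F(b)-F(s)\lesssim (b-s)^2$, the integrand $\bigl(2(F(b)-F(s))\bigr)^{-1/2}$ is non-integrable at $b$, and $T=+\infty$ automatically.
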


\begin{proof}
Assume without loss of generality that $f>0$ on $[0,a)$ and set $F(t)=\int_0^t f(s)ds$.
Let $u$ be the unique  solution of the initial value problem $-u''=f(u)$ ($x>0$) with initial data
$u(0)=0$ and $u'(0)=\sqrt{2F(a)}$, defined on its maximal interval of existence.
Multiplying the equation by $u'$, we obtain the conservation of the energy $E(u):={u'}^2+2F(u)$, hence
\be\label{EnergyODE}
{u'}^2=2(F(a)-F(u)).
\ee
The function $t\to F(a)-F(t)$ is a decreasing bijection from $[0,a]$ to $[F(a),0]$.
Therefore, the  right hand side of \eqref{EnergyODE}, which is positive for $x>0$ small, can never vanish, 
since otherwise at its first zero
we would have $u=a$ and $u'=0$, contradicting local uniqueness.
It follows that $u$ exists for all $x>0$ and satisfies $0\le u<a$ and $u'>0$.
The proposition is proved.
\end{proof}

Now, for the case of positive nonlinearities, the following result (see \cite[Corollary~5.6]{AS}),
actually valid for supersolutions, yields an almost sharp condition for
the nonexistence of nonnegative solutions for problem~\eqref{pbmDirichletHalfspace2}.

\begin{prop} \label{propappendix2}
Let $f:[0,\infty)\to (0,\infty)$ be continuous and satisfy
\be\label{Condfinfty}
f(s)\ge cs^{-1},\quad s\ge 1,
\ee
for some constant $c>0$.
Then the inequality
\be\label{pbmSupersolHalfspace}
-\Delta v\ge f(v),\quad x\in\Rnp
\ee
admits no nonnegative solution $v\in C^2(\Rnp)$.
\end{prop}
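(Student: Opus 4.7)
The plan is to reduce to a convex model nonlinearity, derive a linear lower bound on $v$ from a spherical-average ODE argument, and then close the contradiction using the theory of the generalized principal eigenvalue on the half-space.

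First, by continuity and strict positivity of $f$ on $[0,\infty)$ together with the hypothesis $f(s)\ge cs^{-1}$ for $s\ge 1$, one immediately obtains $f(s)\ge c'/(1+s)$ for all $s\ge 0$ and some $c'>0$. I would therefore replace $f$ by the convex decreasing model $g(s)=c'/(1+s)$ and assume for contradiction that a nonnegative solution $v\in C^2(\Rnp)$ of $-\Delta v\ge g(v)$ exists. The strong minimum principle then gives $v>0$ in $\Rnp$.

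Next, fix $y\in\Rnp$ and consider the spherical average $M(r)$ of $v$ over $\partial B_r(y)$, defined for $r\in[0,y_n)$. Superharmonicity makes $M$ nonincreasing with $M(0)=v(y)$, and Jensen's inequality applied sphere by sphere, using the convexity of $g$, yields the radial ODE inequality
\[
(r^{n-1}M'(r))'\le -r^{n-1}g(M(r)),\qquad r\in(0,y_n).
\]
Using the crude bound $M\le v(y)$, integrating twice and imposing $M\ge 0$ throughout $[0,y_n)$, I would derive $v(y)(1+v(y))\ge C\,y_n^2$, hence the linear lower bound $v(y)\ge c_1 y_n$ for all $y\in\Rnp$ with $y_n$ sufficiently large.

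The hard part is to convert this linear lower bound into a contradiction. The ODE above is valid only on the bounded interval $[0,y_n)$, so the standard one‑dimensional Keller--Osserman argument does not close directly, and the linear growth $v\ge c_1 x_n$ by itself is compatible with harmonic functions such as $v(x)=c_1 x_n$. To close the argument I would follow \cite[Corollary~5.6]{AS} and invoke the Berestycki--Nirenberg--Varadhan characterization of the generalized principal eigenvalue: the existence of a positive supersolution to $(-\Delta-g(v)/v)\,v\ge 0$ forces $\lambda_1(-\Delta-g(v)/v;\,\Rnp)\ge 0$; however, combining the pointwise estimate $g(v)/v\ge c'/\bigl(c_1 x_n(1+c_1 x_n)\bigr)$ with a Hardy-type inequality in the half‑space (and, if needed, iterating the linear lower bound to sharpen the constant $c_1$) forces $\lambda_1<0$, yielding the contradiction. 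The main obstacle lies precisely in this last step, where the strict positivity of $f$ must be exploited quantitatively through the generalized eigenvalue framework to extract information beyond what the spherical-average ODE provides.
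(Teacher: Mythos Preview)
Note first that the paper does not give its own proof of this proposition: it is quoted directly from \cite[Corollary~5.6]{AS}, so there is no in-paper argument to compare against beyond that reference.

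Your Steps~1--3 are fine: the reduction to the convex model $g(s)=c'/(1+s)$, the spherical-average ODE via Jensen, and the resulting linear lower bound $v(y)\ge c_1y_n$ for large $y_n$ are all correct and are a natural opening.

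The gap is in Step~4, and it is a sign error that breaks the closing argument. Since $g$ is decreasing, the lower bound $v\ge c_1x_n$ yields
\[
\frac{g(v)}{v}=\frac{c'}{v(1+v)}\ \le\ \frac{c'}{c_1 x_n\,(1+c_1 x_n)},
\]
the \emph{opposite} of the inequality you wrote. An upper bound on the potential $V=g(v)/v$ cannot force $\lambda_1(-\Delta-V;\Rnp)<0$; at best it gives $\lambda_1(-\Delta-V)\ge\lambda_1\bigl(-\Delta-C/x_n^2\bigr)$, and by the half-space Hardy inequality the latter is nonnegative whenever $C\le 1/4$. To run an eigenvalue/Hardy comparison you would need a \emph{lower} bound on $V$, i.e.\ an \emph{upper} bound on $v$, and nothing in your scheme produces one (positive superharmonic functions on $\Rnp$ admit no such a priori bound---consider $v(x)=x_n$). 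The suggestion to ``iterate to sharpen $c_1$'' does not help either: improving the lower bound on $v$ only worsens the upper bound on $V$. Thus the final step, as written, points in the wrong direction; obtaining the contradiction requires a genuinely different mechanism, which is what the argument in \cite{AS} supplies.
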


The sharpness of assumption \eqref{Condfinfty} for problem~\eqref{pbmDirichletHalfspace2}
can be seen from the example
$$f(u)=(1+u)^{-q},\quad\hbox{ with } u(x)=(1+kx)^{2/(q+1)}-1,\ x>0,$$
with $q>1$ and $k=(q+1)^{-1}\sqrt{2(q-1)}$.\smallskip

If we assume additionally that $f$ is monotone, 
assumption \eqref{Condfinfty} can be weakened,
giving rise to an optimal integrability condition.

\begin{prop} \label{propappendix3}
Let $f:[0,\infty)\to (0,\infty)$ be continuous.

\smallskip

(i) If
\be\label{Condfintegrability1}
\int_0^\infty f(s)\, ds<\infty
\ee
then there exists a positive solution $u\in C^2([0,\infty))$ of the one-dimensional problem
\eqref{pbmODEHalfspace}.

(ii) Assume that $f$ is nonincreasing and that
\be\label{Condfintegrability2}
\int_0^\infty f(s)\, ds=\infty.
\ee
Then inequality \eqref{pbmSupersolHalfspace}
admits no nonnegative solution $v\in C^2(\Rnp)$.
\end{prop}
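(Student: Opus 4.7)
I address the two parts separately.

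For part (i), I will use a direct ODE construction in the spirit of Proposition~\ref{propappendix1}. The hypothesis $\int_0^\infty f<\infty$ gives $F(t):=\int_0^t f$ a finite limit $F(\infty)$. Choose any $v_0>\sqrt{2F(\infty)}$ and solve the IVP $-u''=f(u)$, $u(0)=0$, $u'(0)=v_0$. Multiplying by $u'$ and integrating yields the energy identity $u'(x)^2=v_0^2-2F(u(x))$, so $u'(x)\ge \sqrt{v_0^2-2F(\infty)}>0$ on the maximal interval of existence. Hence $u$ is strictly increasing, and the separated form $x=\int_0^{u(x)} ds/\sqrt{v_0^2-2F(s)}$, whose integrand is bounded below by $1/v_0$, forces $u$ to be defined on all of $[0,\infty)$ with $u\to\infty$.

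For part (ii), I argue by contradiction: suppose $v\ge 0$ in $C^2(\Rnp)$ satisfies $-\Delta v\ge f(v)$. The strategy is to reduce to the corresponding 1D inequality by setting
\[
V(t):=\inf_{x'\in\R^{n-1}} v(x',t),\qquad t>0,
\]
which is nonnegative and lower semicontinuous. I will show that $V$ is a viscosity supersolution of $-V''\ge f(V)$ on $(0,\infty)$. Formally, at a minimizer $x^*(t_0)$ of $v(\cdot,t_0)$, one has $\nabla_{x'}v(x^*,t_0)=0$ and $\Delta_{x'}v(x^*,t_0)\ge 0$, so using $-\Delta v\ge f(v)$,
\[
-\partial_{x_n}^2 v(x^*,t_0)=-\Delta v+\Delta_{x'}v \ge f(v(x^*,t_0))+0=f(V(t_0)).
\]
For any $\phi\in C^2$ touching $V$ from below at $t_0$, the point $(x^*(t_0),t_0)$ is a local minimum of $(x',t)\mapsto v(x',t)-\phi(t)$, and the second-order condition $\phi''(t_0)\le \partial_{x_n}^2 v(x^*,t_0)\le -f(V(t_0))$ is precisely the viscosity supersolution inequality. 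When the infimum is not attained, I regularize via $V_\eps(t):=\inf_{x'}[v(x',t)+\eps|x'|^2]$; the penalty contributes a Hessian term $2\eps I$ to $\Delta_{x'}$, yielding $-V_\eps''\ge f(V_\eps)-2(n-1)\eps$ (using that $v(x^*,t)\le V_\eps(t)$ and $f$ is nonincreasing), and I let $\eps\to 0^+$.

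Given the 1D viscosity inequality, I derive a contradiction in two cases. Since $f>0$, $-V''\ge 0$, so $V$ is concave. If $V$ is bounded above by some $M$, then $f(V)\ge f(M)>0$, so $w(t):=V(t)+\tfrac{f(M)}{2}t^2$ is concave; but $w(t)\ge \tfrac{f(M)}{2}t^2$ grows quadratically, whereas a concave function on $(0,\infty)$ is bounded above by an affine function, a contradiction. Hence $V$ is unbounded above. Concavity plus $V\ge 0$ forces the right-derivative to satisfy $V'(t)\ge 0$ for all $t$ (a negative value at some $t_0$ would make $V(t)\to -\infty$ by monotonicity of $V'$), so $V$ is nondecreasing and $V(t)\to\infty$. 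Multiplying $-V''\ge f(V)$ by $V'\ge 0$ and integrating from $t_0$ to $T$ (rigorous since concavity gives $V\in W^{1,\infty}_{\rm loc}$) yields
\[
\tfrac12 V'(t_0)^2\ge \tfrac12 V'(T)^2+F(V(T))-F(V(t_0))\ge F(V(T))-F(V(t_0)).
\]
Since $\int_0^\infty f=\infty$, $F(V(T))\to\infty$ as $T\to\infty$, forcing the right-hand side to diverge and contradicting the finite left-hand side.

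The main technical obstacle is the rigorous establishment of the 1D inequality for $V$ when the infimum is not attained; the penalization $V_\eps$ handles this cleanly, with the $O(\eps)$ error absorbed once we take $\eps$ small enough to keep $f(V_\eps)-2(n-1)\eps$ strictly positive on the range under consideration, or by passing to the limit $\eps\to 0^+$ in the viscosity framework.
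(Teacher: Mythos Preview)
Your argument for part (i) is correct and close in spirit to the paper's, though you take a strictly larger initial velocity $v_0>\sqrt{2F(\infty)}$, which keeps $u'$ bounded away from zero and makes global existence immediate. The paper instead takes the borderline value $v_0=\sqrt{2F(\infty)}$ and writes the solution explicitly as the inverse of $h(t)=\int_0^t\bigl(2\int_z^\infty f\bigr)^{-1/2}\,dz$. Both are elementary shooting arguments.

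For part (ii) your route is genuinely different. The paper proceeds constructively: it first shows (Step~1) that the one-dimensional problem has no classical solution when $\int_0^\infty f=\infty$, then (Step~2) uses the supersolution $v$ together with monotone iteration on expanding cylinders, the comparison principle for $\Delta\cdot+f(\cdot)$ (valid because $f$ is nonincreasing), and horizontal translation invariance to manufacture an actual one-dimensional \emph{solution} of $-u''=f(u)$, contradicting Step~1. Your approach instead takes the horizontal infimum $V(t)=\inf_{x'}v(x',t)$ and reduces to a one-dimensional viscosity \emph{supersolution} inequality, then runs a direct ODE-type contradiction. Your method is more direct and avoids solving auxiliary boundary value problems; the paper's method is more classical and sidesteps viscosity machinery entirely.

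Two technical points in your write-up deserve more care. First, $V$ is an infimum of continuous functions, hence a priori only upper semicontinuous, so the phrase ``pass to the limit $\eps\to0^+$ in the viscosity framework'' needs justification. One clean fix: since $W_\eps:=V_\eps-(n-1)\eps t^2$ is concave and the family $\{V_\eps\}_{\eps\le1}$ is uniformly bounded on compacts (between $0$ and $V_1$), the $W_\eps$ are locally uniformly Lipschitz; Arzel\`a--Ascoli and monotonicity then give locally uniform convergence $V_\eps\to V$, so $V$ is continuous and the stability of viscosity supersolutions applies. Second, once $V$ is concave, the step ``multiply $-V''\ge f(V)$ by $V'$ and integrate'' is delicate because $V'$ may jump. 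It is cleaner to use directly that the distributional inequality yields $V'_+(a)-V'_+(b)\ge\int_a^b f(V)\,ds$; since $V'_+\ge0$ this gives $\int_a^\infty f(V)\,ds\le V'_+(a)$, and the change of variables $\sigma=V(s)$ with $V'\le V'_+(a)$ then shows $\int_{V(a)}^\infty f(\sigma)\,d\sigma\le (V'_+(a))^2<\infty$, the desired contradiction. With these adjustments your strategy is complete.
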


\begin{proof}
{\bf Step 1.} We first show that the solvability on $[0,\infty)$ of the one-dimensional problem \eqref{pbmODEHalfspace} is equivalent to \eqref{Condfintegrability1}.
Assume \eqref{Condfintegrability1} is satisfied.
Then we claim that a solution of \eqref{pbmODEHalfspace} is given by
$$u(y)=h^{-1}(y),$$
where
$$h(t)= \int_0^{t} dz/g(z),\quad g(z):=\Bigl(2\int_z^\infty f(s)\,ds\Bigr)^{1/2}.$$
(Note that $h$ is an increasing bijection from $[0,\infty)$ to $[0,\infty)$, with $h(t)\gg t$ at infinity.)

\smallskip
Indeed, we have $u(0)=h^{-1}(0)=0$ and $u$ satisfies
$$
\begin{aligned}
h(u(y))=y\
&\Longrightarrow u'(y)={1\over h'(u(y))}=g(u(y)) \\
&\Longrightarrow {u'}^2(y)=2\int_{u(y)}^\infty f(s)\,ds
\Longrightarrow 2[u'u''](y)+2f(u(y))u'(y).
\end{aligned}
$$
Since $u'>0$, it follows that $-u''(y)=f(u(y))$.
\smallskip

Next consider the case \eqref{Condfintegrability2} and assume for contradiction that
\eqref{pbmODEHalfspace} admits a solution $u>0$ defined on $[0,\infty)$.
Then $u$ is concave, hence $\ell:=\lim_{y\to\infty} u'(y)$
exists, with $\ell\in [0,\infty)$ (since $u>0$), and $u$ is nondecreasing.
Set $F(z)=\int_0^z f(s)\,ds$. By the energy relation
$${u'}^2+2F(u)=C:={u'}^2(0),$$
we see that $F(u)$ is bounded and, by \eqref{Condfintegrability2}, we deduce that
$a:=\lim_{y\to\infty} u(y)<\infty.$
Therefore
$$\inf_{y>0} f(u(y))\ge \sigma=\inf_{[0,a]} f>0,$$
hence
$$u''\le -\sigma,\quad y>0.$$
But this contradicts $u>0$.

\smallskip

{\bf Step 2.} Assume that $f$ is nonincreasing and that
 \eqref{pbmSupersolHalfspace} admits a nonnegative solution $v\in C^2(\Rnp)$,
We claim that \eqref{pbmODEHalfspace} also has a solution defined on $[0,\infty)$.
This will complete the proof of Proposition \ref{propappendix3}.

\smallskip
 To prove the claim, we shall use a simplified version of some arguments from~\cite{GQS}.
By shifting in the $x_n$-direction, we can assume
$v\in C^2(\overline{\Rnp})$ and $v>0$ in $\overline{\Rnp}$.
Consider the following sequence of problems in cylinders:
\be\label{pbmDirichletCylinder}
\left\{\begin{array}{llll}
\hfill -\Delta u_j&=&f(u_j),& x\in\Omega_j
\vspace{1mm} \\
\hfill u_j&=&0,& x\in\partial\Omega_j
\end{array}
\right.
\ee
where $j$ is a positive integer and
$$\Omega_j=\bigl\{x=(x',x_n)\in \Rnp;\ |x'|<j,\ x_n<j\bigr\}.$$
It follows from standard monotone iteration that \eqref{pbmDirichletCylinder} admits a nonnegative classical solution
$u_j\in C^2(\Omega_j)\cap C(\overline{\Omega_j})$.
Indeed, it suffices to use $\underline u=0$ as a (strict) subsolution and, as a supersolution,
the unique positive solution $\overline u$ of the linear problem $-\Delta \overline u=f(0)$ in $\Omega_j$
with Dirichlet boundary conditions
(note that the latter satifies $-\Delta \overline u\ge f(\overline u)$ since $f$ is nonincreasing).
Next, since $f$ is nonincreasing, the comparison principle is 
valid for the  nonlinear operator $\Delta\cdot + f(\cdot)$, in any bounded domain.
Consequently, we get $0\le u_j\le v$ in $\Omega_j$. This, along with elliptic estimates,
guarantees that some subsequence of $u_j$ converges, locally uniformly in $\overline{\Rnp}$,
to a positive solution $u\in C^2(\Rnp)\cap C(\overline{\Rnp})$ of \eqref{pbmDirichletHalfspace2}.

Finally, for any fixed $e\in\R^{n-1}$, consider the horizontal translation $u_{j,e}(x',x_n)=u_j(x'+e,x_n)$,
defined in the domain $\Omega_{j,e}=\bigl\{x\in \Rnp;\ |x'-e|<j,\ x_n<j\bigr\}$.
Applying the comparison principle in $\Omega_{j,e}$, we obtain 
$u_{j,e}\le u$ in $\Omega_{j,e}$.
Passing to the limit $j\to\infty$, we get $u(x'+e,x_n)\le u(x',x_n)$ for all $(x',x_n)\in \Rnp$.
Since $e\in\R^{n-1}$ is arbitrary, we see that $u$ depends only on $x_n$
and is hence a positive solution of \eqref{pbmODEHalfspace}.
\end{proof}

Observe that if $f$ is convex and $f(0)>0$,
Propositions~\ref{propappendix1}-\ref{propappendix3} give a complete classication
for the Liouville property in problem \eqref{pbmDirichletHalfspace2}. Indeed,
either $f$ vanishes somewhere and solutions exist by Proposition \ref{propappendix1}, or $f$ is positive and nonincreasing and then Proposition \ref{propappendix3} gives a necessary and sufficient condition for existence,
or $f$ is positive and nondecreasing in $(a,\infty)$ for some $a\ge0$ and then by Proposition \ref{propappendix2} there are no solutions.
\bigskip

{\bf Acknowledgements.}
The second author is funded by CNPq grant 427056/2018-7, and FAPERJ grant E-26/203.015/2017.
This work was partially done during a visit of the third author to the Mathematics Departement of the
Pontifícia Universidade Cat\'olica do Rio de Janeiro. He thanks this institution for the hospitality.

\end{document}